\documentclass[11pt,reqno]{amsart}
\usepackage{iftex}
\ifPDFTeX
  \usepackage[utf8]{inputenc}
\fi
\usepackage[T1]{fontenc}
\IfFileExists{lmodern.sty}{\usepackage{lmodern}}{}
\usepackage{amsmath,amssymb,amsthm}
\usepackage{xcolor}
\usepackage[margin=25mm,top=28mm,bottom=28mm]{geometry}
\IfFileExists{microtype.sty}{\usepackage{microtype}}{}
\IfFileExists{enumitem.sty}{%
  \usepackage{enumitem}
  \setlist[enumerate]{leftmargin=*,label=(\roman*),itemsep=3pt}%
}{}
\usepackage[hidelinks,pdfencoding=auto]{hyperref}
\allowdisplaybreaks[2]
\numberwithin{equation}{section}
\theoremstyle{plain}
\newtheorem{theorem}{Theorem}[section]
\newtheorem{proposition}[theorem]{Proposition}
\newtheorem{lemma}[theorem]{Lemma}
\newtheorem{corollary}[theorem]{Corollary}
\theoremstyle{definition}
\newtheorem{definition}[theorem]{Definition}

\theoremstyle{remark}
\newtheorem{remark}[theorem]{Remark}
\newcommand{\R}{\mathbb R}
\newcommand{\Sn}{\mathbb S^n}
\newcommand{\Mp}{\mathcal M^+_{\lambda,\Lambda}}
\newcommand{\Mm}{\mathcal M^-_{\lambda,\Lambda}}
\newcommand{\Om}{\Omega}
\newcommand{\norm}[1]{\left\|#1\right\|}

\DeclareMathOperator{\tr}{tr}
\DeclareMathOperator{\diag}{diag}
\DeclareMathOperator{\dist}{dist}
\DeclareMathOperator{\diam}{diam}

\newcommand{\doi}[1]{\href{https://doi.org/#1}{doi: #1}}
\title[Boundary regularity for singular equations]
{Boundary   Regularity  
for Fully Nonlinear Singular Elliptic   Equations}

\author{Jiqi Dong}
\author{Xuemei Li}

\address{J. Dong: School of Mathematical Sciences, University of Jinan, Jinan, P.R.China 250022}
\address{X. Li: School of Mathematical Sciences, University of Jinan, Jinan, P.R.China 250022}

\email{17860357019@163.com}
\email{sms\_lixm@ujn.edu.cn}

\date{}
\subjclass[2020]{Primary 35B65, 35J75; Secondary 35D40, 35J60}
\keywords{Singular fully nonlinear elliptic equation, viscosity solution, pointwise boundary $C^{1,\alpha}$ regularity, boundary $W^{2,\delta}$ regularity, $C^{1,\alpha}$ domains}

\begin{document}
\begin{abstract}
We study boundary regularity for the fully nonlinear singular elliptic equation of the form
$|Du|^\gamma F(D^2u)=f$, where $-1<\gamma<0$.
First, we prove pointwise boundary $C^{1,\alpha}$ estimates with pointwise
$C^{1,\alpha}$ Dirichlet data, which weakens the $C^2$ domain
smoothness  requirement from  Birindelli and
Demengel~\cite{BD10}.
The proof uses compactness and perturbation, without flattening the boundary.
Second, we extend the $W^{2,\delta}$ estimates of Li and Li~\cite{LiLi17} from balls to $C^{1,\alpha}$ domains
with $C^{1,\alpha}$ Dirichlet data{, for $0<\delta\le\delta_0<1$.
Here $\delta_0$ is an exponent in the interior $W^{2,\delta_0}$ estimate}.
The proof combines pointwise boundary $C^{1,\alpha}$ estimates,
interior $W^{2,\delta_0}$ estimates, and a Whitney decomposition.
\end{abstract}
\maketitle

\section{Introduction and main results}\label{sec:intro}
{In this paper, we study the Dirichlet problem}
\begin{equation}\label{eq:original}
\begin{cases}
|Du|^\gamma F(D^2u)=f &\text{in }\Omega,\\
u=g &\text{on }T,
\end{cases}
\end{equation}
{where $-1<\gamma<0$, $\Omega\subset\R^n$ is a domain,
and $T\subset\partial\Omega$ is a relatively open boundary portion.
We assume that $f$ is continuous and bounded.
Throughout the paper, $F:\Sn\to\R$ satisfies $F(0)=0$
and is uniformly elliptic: there are constants $0<\lambda\le\Lambda$ such that
\[
\lambda\|N\|\le F(M+N)-F(M)\le\Lambda\|N\|,
\qquad M,N\in\Sn,\quad N\ge0.
\]
Here $\Sn$ is the space of real symmetric $n\times n$ matrices,
and $\|\cdot\|$ is the operator norm.
The factor $|Du|^\gamma$ is singular at points where $Du=0$.
Our aim is to prove boundary gradient and second derivative estimates
under $C^{1,\alpha}$ assumptions on the boundary and the Dirichlet data.}

{The uniformly elliptic case $\gamma=0$ provides the foundation for our analysis.
We refer to Caffarelli and Cabr\'e~\cite{CC95} for the basic regularity theory.
Silvestre and Sirakov~\cite{SS14} proved boundary gradient
and second derivative estimates on smooth domains.
Lian and Zhang~\cite{LZ20} used compactness and perturbation arguments
to prove pointwise boundary $C^{1,\alpha}$ and $C^{2,\alpha}$ regularity
on $C^{1,\alpha}$ and $C^{2,\alpha}$ domains, respectively.
Their method works directly with the boundary geometry.
For second derivative estimates on $C^{1,\alpha}$ domains,
Li and Li~\cite{LiLi23} proved $W^{2,\delta}$ estimates
for fully nonlinear elliptic inequalities.
Li, Li and Zhang~\cite{li2023} obtained boundary $W^{2,p}$ estimates
on such domains under suitable assumptions.
Their proof combines boundary estimates, interior estimates,
and a Whitney decomposition.}

{For $-1<\gamma<0$, Birindelli and Demengel~\cite{BD04}
introduced a viscosity framework and proved comparison principles
for singular operators.
They later proved boundary $C^{1,\beta}$ regularity for zero Dirichlet
problems on bounded $C^2$ domains~\cite[Corollary~3.3]{BD10}.
Their results require suitable homogeneity and structural assumptions
on the operator.
Their proof combines regularization and a fixed point argument.
It uses uniform Lipschitz bounds and standard boundary estimates
for auxiliary uniformly elliptic equations.
The boundary bounds rely on distance barriers from their earlier work~\cite{BD07}.
For a $C^2$ domain, the distance function $d(x)=\dist(x,\partial\Omega)$
is $C^{1,1}$ in a boundary strip and satisfies $D^2d\le CI$
there with a uniform constant $C$.
This bound controls the Hessian terms in those barriers.
A $C^{1,\alpha}$ boundary alone does not provide this bound.
Thus the same barrier argument cannot be used unchanged under our
pointwise boundary assumption.
We use compactness and perturbation to work directly with the boundary
geometry, without differentiating or flattening the boundary.}

{For second derivatives, Li and Li~\cite{LiLi17}
proved global $W^{2,\delta_0}$ estimates on balls for singular Pucci inequalities
with bounded continuous right-hand sides.
Their theorem is stated on $B_1$, with no smoothness assumption on the
boundary values.
Their proof uses sliding paraboloids and a covering argument adapted to the ball.
In particular, the convexity of $B_1$ keeps the paraboloid vertices inside
$B_1$ in the proof of their Lemma~3.2.
Their Remark~1.1 extends the estimate to domains that are unions of balls
contained in the domain, with uniformly positive radii and bounded overlap.
A general $C^{1,\alpha}$ domain need not be convex or admit such a covering.
It may even have a boundary point with no interior tangent ball.
Thus their global argument does not directly cover all $C^{1,\alpha}$ domains.
We instead combine pointwise boundary $C^{1,\alpha}$ estimates,
interior $W^{2,\delta_0}$ estimates, and a Whitney decomposition.
The boundary estimate gives the decay needed to sum the interior estimates
as the cubes approach the boundary.
Li and Li later obtained estimates with $L^n$ right-hand sides~\cite{LiLi18}.}

{We prove two results for the singular equation~\eqref{eq:original}.
The first gives pointwise boundary $C^{1,\alpha}$ regularity
for every $0<\alpha<1$.
It requires only pointwise $C^{1,\alpha}$ assumptions on the boundary
and the Dirichlet data at the chosen point.
Thus the boundary need not be $C^2$ near that point.
The second gives $W^{2,\delta}$ estimates up to a $C^{1,\alpha}$ boundary
portion with $C^{1,\alpha}$ Dirichlet data.
For this equation, it extends the estimates on balls in~\cite{LiLi17}
to $C^{1,\alpha}$ boundary portions.
The allowed range is $0<\delta\le\delta_0<1$, where $\delta_0$
is the interior exponent.}

{We now state the main results.
The first theorem concerns pointwise regularity at the origin.}
\begin{theorem}[Boundary pointwise $C^{1,\alpha}$ regularity]\label{thm:main}
Let $-1<\gamma<0$ and $0<\alpha<1$. Assume
$F:\Sn\to\R$ {is} uniformly
elliptic with constants $\lambda,\Lambda$ and $F(0)=0$.  Suppose that $\partial\Omega\in C^{1,\alpha}(0)$, i.e., $0\in \partial\Omega$ and there exists $K_\Om\ge0$ such that
\begin{equation}\label{eq:geometry}
B_1\cap\{x_n>K_\Om|x'|^{1+\alpha}\}\subset\Om,\qquad
B_1\cap\{x_n<-K_\Om|x'|^{1+\alpha}\}\subset\Om^c.
\end{equation}
Let $u\in C(\overline\Om\cap B_1)\cap L^\infty(\Om\cap B_1)$ be a singular
viscosity solution, in the sense of Definition~\ref{def:viscosity}, of
\begin{equation}\label{eq:pointwise-problem}
\begin{cases}
|Du|^\gamma F(D^2u)=f(x)&\ \ \mathrm{in }\ \Om\cap B_1,\\
u=g&\ \ \mathrm{on }\ \partial\Om\cap B_1,
\end{cases}
\end{equation}
where  
\begin{equation}\label{eq:pointwise-data}
{f\in C(\Om\cap B_1)\cap L^\infty(\Om\cap B_1)},
\qquad g\in C^{1,\alpha}(0)
\end{equation}
 in the following sense that there are $b\in\R^n$ and $K_g\ge0$ satisfying
\begin{equation}\label{eq:g-data}
|g(x)-g(0)-b\cdot x|\le K_g|x|^{1+\alpha},\qquad \forall x\in\partial\Om\cap B_1.
\end{equation}
Then $u\in C^{1,\alpha}(0)$, i.e.,
there exists  an affine function
$L$ with $L(0)=g(0)$ such that
\begin{equation}\label{eq:main-error}
|u(x)-L(x)| \le C |x|^{1+\alpha}\left(\norm u_{L^\infty(\Om\cap B_1)}+|g(0)|+|b|+K_g
 +\norm f_{L^\infty(\Om\cap B_1)}^{1/(1+\gamma)}\right),\ \ \forall x\in\Om_{r_0},
\end{equation} 
and
\begin{equation}\label{eq:main-slope}
|DL| \le C\left(\norm u_{L^\infty(\Om\cap B_1)}+|g(0)|+|b|+K_g
 +\norm f_{L^\infty(\Om\cap B_1)}^{1/(1+\gamma)}\right),
\end{equation} 
where $r_0$ and $C$ depend on $n,\lambda,\Lambda,\gamma,\alpha$ and $K_{\Om}$.
In addition, the affine expansion is unique.
\end{theorem}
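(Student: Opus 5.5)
We follow the compactness and perturbation scheme of Lian and Zhang~\cite{LZ20}, adapted to the singular operator. The first step is normalization. Subtracting $g(0)$ and dividing by
\[
M:=\norm u_{L^\infty}+|g(0)|+|b|+K_g+\norm f_{L^\infty}^{1/(1+\gamma)}
\]
preserves the structure of the equation: if $v=u/M$, then $|Dv|^\gamma F_M(D^2v)=f/M^{1+\gamma}$ with $F_M(X)=F(MX)/M$, which has the same ellipticity constants. A further scaling $v_r(x)=v(rx)/r$ sends the right-hand side to $r^{1-\gamma}\cdot$ (after adjusting by $r^{1+\gamma}$ appropriately), the boundary constant $K_\Om$ to $r^\alpha K_\Om$, and $K_g$ to $r^\alpha K_g$. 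So we may assume $\norm u_{L^\infty}\le1$, $g(0)=0$, and $\norm f_{L^\infty},K_g,K_\Om\le\delta$, with $\delta$ small to be chosen.

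The key lemma is an approximation statement at one scale. We claim there is $0<\eta<1$ such that the following holds. Suppose $|u-L_k|\le\eta^{k(1+\alpha)}$ in $\Om\cap B_{\eta^k}$ for an affine $L_k$ with $L_k(0)=0$ and bounded slope, and the small-data assumptions hold. Then there is an affine $L_{k+1}$ with $|L_{k+1}-L_k|$ controlled by $C\eta^{k(1+\alpha)}$ on $B_{\eta^k}$ such that $|u-L_{k+1}|\le\eta^{(k+1)(1+\alpha)}$ in $\Om\cap B_{\eta^{k+1}}$. After rescaling, $w(x)=(u-L_k)(\eta^kx)/\eta^{k(1+\alpha)}$ solves
\[
|Dw+\eta^{-k\alpha}p_k|^\gamma F_k(D^2w)=\tilde f
\]
with $p_k=DL_k$ and small $\tilde f$. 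This shifted gradient is the main obstacle. The equation becomes $|q+Dw|^\gamma F(D^2w)=\tilde f$, where $|q|$ may be large or small, and we need estimates uniform in $q$. The way we would handle it is the standard dichotomy of Imbert--Silvestre type, which is easier here since $\gamma<0$. Wherever the gradient test is nonzero, $F(D^2w)=\tilde f\,|q+Dw|^{-\gamma}$. Since $-\gamma>0$ and $w$, $q$ and the test gradients are locally bounded, after truncation or using Lipschitz bounds the right-hand side is small when $\tilde f$ is small. The singular points are handled by the definition of viscosity solution (Definition~\ref{def:viscosity}), in which test functions with vanishing gradient impose no condition or the Pucci-type condition. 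So $w$ satisfies $\Mm(D^2w)\le C\norm{\tilde f}^{\,\cdot}$ and $\Mp(D^2w)\ge -C\norm{\tilde f}^{\,\cdot}$ in the viscosity sense, uniformly in $q$; we would establish this first as a lemma. Boundary equicontinuity then follows from the boundary Hölder estimate for Pucci classes on domains satisfying the exterior cone/flat condition \eqref{eq:geometry} with small $K_\Om$, together with the smallness of $g-L$ on $\partial\Om$.

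The limiting step goes by contradiction. If the lemma fails, we obtain sequences with $\delta_m\to0$ and $K_{\Om,m}\to0$. The domains converge to the half ball $B_1^+$ and the boundary data converge to $0$. By stability of viscosity solutions, and the uniform Hölder bounds (interior from Krylov--Safonov, boundary from barriers), $w_m\to \bar w$ uniformly. The limit solves $F_\infty(D^2\bar w)=0$ in $B_1^+$ with $\bar w=0$ on the flat part, where $F_\infty$ is a limit of uniformly elliptic operators; for $|q_m|\to\infty$, or for bounded $q_m$ with $\tilde f_m\to0$, the singular factor drops out. Flat boundary $C^{2,\bar\alpha}$ regularity (Krylov, as in \cite{LZ20}) gives $|\bar w-\bar p\,x_n|\le C|x|^{2}\le \tfrac12\eta^{1+\alpha}$ on $B_\eta^+$ once $\eta$ is small, since $\alpha<1$. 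This contradicts the assumed failure for large $m$. Note that the affine correction must be of the form $\bar p\,x_n$ plus the tangential slope $b$, so that $L(0)=0$ is kept.

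Iterating the lemma, the slopes $DL_k$ form a Cauchy sequence with $|DL_{k+1}-DL_k|\le C\eta^{k\alpha}$. Their limit $L$ satisfies $|u-L|\le C|x|^{1+\alpha}$ in $\Om_{r_0}$ and $|DL|\le C$, which are \eqref{eq:main-error} and \eqref{eq:main-slope} after undoing the normalization. Uniqueness is immediate: the difference of two admissible affine functions vanishes at $0$ and is $O(|x|^{1+\alpha})$ on the nonempty open cone $\{x_n>K_\Om|x'|^{1+\alpha}\}\cap B_{r_0}$, hence it is zero.
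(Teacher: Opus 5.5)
Your overall architecture matches the paper's: normalize, obtain boundary equicontinuity, pass by compactness to $F_\infty(D^2\bar w)=0$ in $B_1^+$ with zero data, apply flat-boundary regularity, iterate with corrections $a_kx_n$ added to $b\cdot x$, then conclude with a Cauchy argument and uniqueness. The gap is in your treatment of the singular factor, which is precisely the new ingredient beyond \cite{LZ20}.

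\textbf{First gap: zero test gradients.} Under Definition~\ref{def:viscosity}(b), at a point where $u$ is not locally constant, a test function with $D\varphi(x_0)=0$ imposes no condition at all. Case (ii) only concerns locally constant $u$. So the definition does not supply ``the Pucci-type condition'' there, as you claim.

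After subtracting $L_k$ and rescaling, these are the tests for $w$ with $D\varphi=-q$, and every tool you use needs an inequality for them:
\begin{enumerate}
\item comparison with a barrier $v$ yields no contradiction at a touching point where $q+Dv=0$;
\item Krylov--Safonov and boundary H\"older estimates for Pucci classes require the inequalities for all tests;
\item in the stability step, the touching points $x_m$ may satisfy $q_m+D\psi(x_m)=0$.
\end{enumerate}
You must first prove that singular solutions are ordinary viscosity solutions of $F(D^2u)=f|Du|^{-\gamma}$, i.e.\ that $F(D^2\varphi(x_0))\ge0$ for upper tests with zero gradient. This is Proposition~\ref{prop:continuous}. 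The paper proves it with the perturbation $\tfrac12x\cdot Ax-t|Px|$, which produces nearby touching points with small \emph{nonzero} gradients. There $F(D^2\psi_t)\le-c$ contradicts $F(D^2\psi_t)\ge-C_f|D\psi_t|^{-\gamma}\to0$. Your mention of an Imbert--Silvestre argument points the right way, but the step you actually write down relies on the definition, which gives nothing there.

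\textbf{Second gap: the size of the right-hand side.} For $\gamma<0$, the equation $F(D^2w)=\tilde f\,|q+Dw|^{-\gamma}$ gets \emph{worse} as $|q|$ grows, since the right-hand side is of order $\tilde f|q|^{-\gamma}$. So there is no estimate uniform in an arbitrary shift. Your claim that ``for $|q_m|\to\infty$ the singular factor drops out'' is the degenerate-case ($\gamma>0$) intuition reversed. Nor can you remove the gradient dependence by ``truncation or Lipschitz bounds'': no uniform gradient bound up to a merely pointwise $C^{1,\alpha}$ boundary is available a priori, since that is essentially what you are proving. Hence Pucci inequalities without a first-order term do not follow.

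What makes the scheme work is the coupling between $q$ and $\tilde f$ in the iteration, together with bounded slopes.
\begin{enumerate}
\item In your scaling, $\tilde f|q|^{-\gamma}=\eta^{k(1-\alpha)}|DL_k|^{-\gamma}f$, which is small only because $|DL_k|\le C$.
\item The inequalities $|z_1+z_2|^{-\gamma}\le|z_1|^{-\gamma}+|z_2|^{-\gamma}$ and $|p|^{-\gamma}\le1+|p|$ then give $|H_k(y,p)|\le\varepsilon_0(1+|p|)$, as in \eqref{eq:iteration-H}.
\item The one-step lemma must therefore be stated for $F(D^2v)=H(x,Dv)$ with this small linear-growth bound. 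Interior equicontinuity then comes from Krylov--Safonov with drift, using $\Mm(D^2v)-|Dv|\le1$ and $\Mp(D^2v)+|Dv|\ge-1$.
\item Boundary equicontinuity must be proved for the same class. The paper uses the explicit strip barrier of Lemma~\ref{lem:barrier}, where $|Dv|\le K_b+8$ keeps $|H(x,\pm Dv)|\le1$. A boundary H\"older estimate for Pucci classes with drift under your exterior condition would also do.
\end{enumerate}
With these two corrections your outline becomes the paper's proof.

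A minor slip: under $v\mapsto v(rx)/r$ the right-hand side is multiplied by $r$, not $r^{1-\gamma}$.
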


{The second theorem gives a boundary $W^{2,\delta}$ estimate.}

\begin{theorem}[Boundary $W^{2,\delta}$ estimate]
\label{thm:boundary-sobolev}
Let $-1<\gamma<0$ and $0<\alpha<1$. Let $\Omega\subset\R^n$ be a
bounded domain with a relatively open $C^{1,\alpha}$ boundary portion
$T\subset\partial\Omega$.
Assume that $F:\Sn\to\R$ is uniformly elliptic with constants
$\lambda,\Lambda$ and $F(0)=0$.
Suppose that $u\in C(\Omega\cup T)\cap L^\infty(\Omega)$ is a singular
viscosity solution of
\begin{equation}\label{eq:boundary-sobolev-problem}
\begin{cases}
|Du|^\gamma F(D^2u)=f(x)&\text{in }\Omega,\\
u=g&\text{on }T,
\end{cases}
\end{equation}
where
\begin{equation}\label{eq:boundary-sobolev-data}
f\in C(\Omega)\cap L^\infty(\Omega),\qquad
 g\in C^{1,\alpha}(T).
\end{equation}
Then there exists $\delta_0=\delta_0(n,\lambda,\Lambda)\in(0,1)$ such that,
for every $0<\delta\le\delta_0$ and every domain $\Omega'\subseteq\Omega$
with $\overline{\Omega'}\subset\Omega\cup T$, we have
\begin{equation}\label{eq:boundary-sobolev}
\norm u_{W^{2,\delta}(\Omega')}
\le C\left(\norm u_{L^\infty(\Omega)}
+\norm g_{C^{1,\alpha}(T)}
+\norm f_{L^\infty(\Omega)}^{1/(1+\gamma)}\right).
\end{equation}
Here $C$ depends on $n,\lambda,\Lambda,\gamma,\alpha,\delta,T,\Omega'$ and
$\Omega$.
In particular, when $T=\partial\Omega$, the estimate holds with
$\Omega'=\Omega$.
\end{theorem}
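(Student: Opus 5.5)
We follow the strategy announced in the introduction: combine the pointwise boundary estimate of Theorem~\ref{thm:main}, an interior $W^{2,\delta_0}$ estimate for the singular equation, and a Whitney decomposition of $\Omega'$. First we normalize. Set
\[
K:=\norm u_{L^\infty(\Omega)}+\norm g_{C^{1,\alpha}(T)}+\norm f_{L^\infty(\Omega)}^{1/(1+\gamma)}.
\]
Replacing $u$ by $u/K$ and $F$ by $K^{-1}F(K\,\cdot)$ turns $f$ into $f/K^{1+\gamma}$ and preserves ellipticity. So we may assume $K\le1$. Since $\overline{\Omega'}\subset\Omega\cup T$ is compact, it is covered by finitely many interior balls and finitely many boundary neighborhoods $B_{\rho}(x_0)\cap\Omega$ with $x_0\in T$. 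Each boundary neighborhood is, after rotation and translation, a $C^{1,\alpha}$ graph region $\{x_n>\varphi(x')\}$ with uniform $C^{1,\alpha}$ constant. Interior balls are handled directly by the interior estimate. It therefore suffices to bound $\int_{\Omega\cap B_\rho(x_0)}|D^2u|^\delta$ for boundary neighborhoods.

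Second, we apply Theorem~\ref{thm:main} uniformly at every boundary point $y\in T\cap B_{2\rho}(x_0)$, after translation and rotation. The geometry constant $K_\Omega$ and the data constants $|b|,K_g$ are controlled by the $C^{1,\alpha}$ norms of $T$ and $g$. This gives affine functions $L_y$ with $|DL_y|\le C$ and
\[
|u(x)-L_y(x)|\le C|x-y|^{1+\alpha}\quad\text{for } x\in\Omega\cap B_{r_0}(y),
\]
with $C,r_0$ uniform in $y$.

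Third, we take a Whitney decomposition of $\Omega\cap B_\rho(x_0)$ into cubes $Q$ with $\diam Q\simeq \dist(Q,\partial\Omega)=:d_Q$, and choose $y_Q\in T$ nearest to $Q$. On the doubled cube $\tilde Q$, set $v=u-L_{y_Q}$. Then $v$ solves $|Dv+DL_{y_Q}|^\gamma F(D^2v)=f$, and $\|v\|_{L^\infty(\tilde Q)}\le Cd_Q^{1+\alpha}$. Rescale by $w(z)=d_Q^{-1-\alpha}v(x_Q+d_Qz)$ on a unit cube. The result is an equation
\[
|Dw+q|^\gamma \tilde F(D^2w)=\tilde f,\qquad q=d_Q^{-\alpha}DL_{y_Q},\quad |\tilde f|\le d_Q^{\,1-\alpha(1+\gamma)}\le1 .
\]
The interior $W^{2,\delta_0}$ estimate has to be applied to this shifted equation with constant independent of $q$. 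This is the main obstacle. Li--Li~\cite{LiLi17} treat singular Pucci inequalities: a viscosity solution of the singular equation satisfies $\Mm(D^2w)\le |\tilde f|\,|Dw+q|^{-\gamma}$ and $\Mp(D^2w)\ge -|\tilde f|\,|Dw+q|^{-\gamma}$. For $-\gamma>0$ the right side is bounded by $C(|Dw|^{-\gamma}+|q|^{-\gamma})$, so large $|q|$ is harmful. We would first try to avoid $q$. Where $|DL_{y_Q}|$ is small relative to $d_Q^{\alpha}$, the shift $q$ is bounded. Where it is large, the gradient of $u$ stays away from zero on $\tilde Q$, and the equation is uniformly elliptic with bounded right side $f|Du|^{-\gamma}$. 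Indeed, interior $C^{1,\beta}$ estimates give $|Dw|\le C$, and hence $|Du-DL_{y_Q}|\le Cd_Q^{\alpha}$. Either way we get
\[
|\{z\in Q_1: |D^2w|>t\}|\le Ct^{-\delta_0}.
\]
Alternatively, one can simply invoke the $W^{2,\delta_0}$ interior estimate of~\cite{LiLi17} stated with data independent of shifts, if available.

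Finally, we undo the scaling: $D^2u=D^2v=d_Q^{\alpha-1}D^2w$. For $\delta<\delta_0$,
\[
\int_Q|D^2u|^\delta\le C d_Q^{\,n+\delta(\alpha-1)}.
\]
In the endpoint case $\delta=\delta_0$ we use the weak-type bound and sum distribution functions directly. The number of Whitney cubes of size $\simeq2^{-k}$ is $\lesssim 2^{k(n-1)}$. Hence
\[
\sum_Q d_Q^{\,n+\delta(\alpha-1)}\lesssim\sum_k 2^{-k(1-\delta(1-\alpha))}<\infty,
\]
since $\delta<1$. Undoing the normalization gives \eqref{eq:boundary-sobolev}. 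The bounds on $\|u\|_{L^\delta}$ and $\|Du\|_{L^\delta}$ follow from $|u|\le1$ and $|Du|\le C$, the latter from the $C^{1,\alpha}$ expansions combined with the interior gradient estimates. When $T=\partial\Omega$, take $\Omega'=\Omega$ by the same covering.
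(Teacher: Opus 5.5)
\textbf{There is a genuine gap} at the step you yourself call the main obstacle. You never establish an interior $W^{2,\delta_0}$ bound for $w$ that is uniform in the shift $q=d_Q^{-\alpha}DL_{y_Q}$, and none of the arguments you offer supplies one.

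\begin{itemize}
\item By Proposition~\ref{prop:continuous} and the change of variables, $w$ solves $\tilde F(D^2w)=\tilde f\,|Dw+q|^{-\gamma}$ in the usual viscosity sense. There the power $-\gamma>0$ is not singular. So whether $Du$ stays away from zero is irrelevant; only the size of the right-hand side matters.
\item Your claim that ``interior $C^{1,\beta}$ estimates give $|Dw|\le C$'' presupposes a $q$-uniform estimate. It is needed for exactly the equation whose right-hand side you have just called harmful for large $|q|$. As written, the case split is therefore circular.
\item The fallback of citing \cite{LiLi17} does not help either. The class in Theorem~\ref{thm:lili-inequalities} is not invariant under adding linear functions, so it applies only to $w+q\cdot z$. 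The cost $\|q\cdot z\|_{L^\infty}\lesssim d_Q^{-\alpha}$ wipes out precisely the extra factor $d_Q^{\alpha\delta}$ you are trying to gain.
\end{itemize}

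Your route can be repaired, but only by an observation you do not make. One has $|\tilde f|\,|q|^{-\gamma}\le d_Q^{1-\alpha(1+\gamma)}d_Q^{\alpha\gamma}|DL_{y_Q}|^{-\gamma}\le Cd_Q^{1-\alpha}$. Hence $|\tilde f|\,|Dw+q|^{-\gamma}\le C(1+|Dw|)$ uniformly in $q$; this is the same mechanism as the bound on $H_k$ in the proof of Theorem~\ref{thm:main}. Then $\Mm(D^2w)-C|Dw|\le C$ and $\Mp(D^2w)+C|Dw|\ge -C$. You would still need an external $W^{2,\varepsilon}$ estimate for uniformly elliptic Pucci inequalities with a linear gradient term, and its exponent need not be the $\delta_0$ of Corollary~\ref{lem:lili}.

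\textbf{The difficulty is self-inflicted.} Note that $w+q\cdot z=d_Q^{-1-\alpha}\bigl(u(x_Q+d_Qz)-L_{y_Q}(x_Q)\bigr)$ is just a rescaling of $u$ minus a constant. The paper works directly with such a function.
\begin{itemize}
\item It subtracts only $g(y_k)$, which leaves the singular equation unchanged.
\item Theorem~\ref{thm:main}, with the slope bound $|DL_{y_k}|\le C\mathcal N$, gives $|u-g(y_k)|\le C\mathcal N d_k$ on $\widetilde Q_k$.
\item The scaled Corollary~\ref{lem:lili} then yields $\int_{Q_k}|D^2u|^{\delta_0}\le C\mathcal N^{\delta_0}d_k^{n-\delta_0}$, and likewise for $|Du|^{\delta_0}$.
\item H\"older's inequality together with Lemma~\ref{lf} concludes, because $n-\delta>n-1$ for $\delta<1$.
\end{itemize}
The extra decay you chase is not needed.

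\textbf{Your endpoint argument is also wrong.} The weak-type bound $|\{|D^2w|>t\}|\le Ct^{-\delta_0}$ does not control $\int_Q|D^2u|^{\delta_0}$ even on a single cube, since it leads to $\int^\infty t^{-1}\,dt$. Summing distribution functions therefore gives only weak $L^{\delta_0}$. You need the strong-type estimate \eqref{eq:lili-interior}, as the paper uses, or you must take $\delta_0$ strictly below the weak-type exponent.
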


{The singular equation requires a different treatment from the degenerate equation.
For $\gamma>0$, the gradient factor vanishes at zero.
After affine subtraction and rescaling, the gradient shift may become large.
The degenerate proof therefore needs estimates that are uniform in this shift.
For $-1<\gamma<0$, the gradient factor is undefined at zero.
We use the zero-gradient result of Birindelli and Demengel~\cite{BD10,BD15}
to pass to a continuous form of the equation.
Its right-hand side grows sublinearly in the gradient.
Bounds on the affine slopes and rescaling give
$|H_k(x,p)|\le\varepsilon_0(1+|p|)$ at every step.
This bound allows the compactness argument to continue throughout the iteration.
Boundary barriers then allow us to adapt the geometric method
of Lian and Zhang~\cite{LZ20} without flattening the boundary.
Birindelli and Demengel~\cite{BD10} treated zero Dirichlet problems on bounded $C^2$ domains.
For the equation considered here, Theorem~\ref{thm:main} allows
pointwise $C^{1,\alpha}$ boundaries and nonzero $C^{1,\alpha}$ data.
There is also a difference in second derivative estimates.
In the degenerate case, boundedness of $u$ and $f$ alone does not give
uniform interior $W^{2,\delta}$ estimates for any $\delta>0$.
In the singular case, we can use the estimates on balls in~\cite{LiLi17}.
The difficulty is to preserve the singular equation and obtain bounds
that can be summed near the boundary.
We subtract a boundary value on each Whitney cube to do this.
Theorem~\ref{thm:boundary-sobolev} thus extends those estimates
to $C^{1,\alpha}$ boundary portions.}

{The proofs proceed as follows.
We first recall that every singular viscosity solution satisfies
$F(D^2u)=f(x)|Du|^{-\gamma}$ in the usual viscosity sense~\cite{BD15}.
After normalization, we construct barriers near an almost flat boundary.
These barriers and interior H\"older estimates give the compactness needed
to pass to a flat boundary problem.
The limit solves a homogeneous uniformly elliptic equation with zero boundary data.
Its boundary regularity gives an affine approximation with error $Cr^2$.
Rescaling and iteration then give an error of $Cr^{1+\alpha}$,
which proves Theorem~\ref{thm:main}.
To prove Theorem~\ref{thm:boundary-sobolev}, we use a Whitney decomposition
as in~\cite{LiLi23,li2023}.
On each cube, we subtract $g(y)$ at a nearby boundary point $y$.
The boundary estimate bounds $|u-g(y)|$ by a constant times the cube diameter $d$.
The interior estimate of~\cite{LiLi17} then bounds
the integral of $|D^2u|^\delta$ by a constant times $d^{n-\delta}$.
Since $\delta<1$, we can sum these bounds over the Whitney cubes.
We estimate the gradient term in the same way.
A finite covering gives the estimate on $\Omega'$.}

{The paper is organized as follows.
Section~\ref{sec:prelim} gives the definitions and basic estimates.
It also recalls the passage from the singular equation to a continuous equation.
Section~\ref{sec:compact} constructs the boundary barriers
and proves Theorem~\ref{thm:main} by compactness and iteration.
Section~\ref{sec:boundary-sobolev} uses Whitney decomposition
to prove the boundary $W^{2,\delta}$ estimates.}

\noindent\textbf{Notation.} \\
1. $e_i=(0,...,0,1,...,0)=i^{th}$ standard coordinate vector.\\
2. $x'=(x_1,x_2,...,x_{n-1})$ and $x=(x',x_n).$\\
3. $\mathbb{R}^n_+=\{x\in \mathbb{R}^n:x_n>0\}.$\\
4. $B_r(x_0)=\{x\in \mathbb{R}^n: |x-x_0|<r\}$ and $B^+_r (x_0) = B_r(x_0) \cap  \mathbb{R}^n_+$.\\
5. $B_r'=\{x'\in \mathbb{R}^{n-1}: |x'|<r\}$.\\
6. $T_r= B_r\cap\{x_n=0\}.$\\
7. $\Omega_r (x_0)= \Omega\cap B_r(x_0)$ and $(\partial\Omega)_r (x_0)= \partial\Omega\cap B_r(x_0)$. We omit $x_0$ when $x_0=0.$\\
8. $\mathbb S(n)$ is the space of real symmetric $n\times n$ matrices with operator norm $\|\cdot\|$.

\section{Basic Definitions and preliminary results}\label{sec:prelim}

\subsection{Pointwise boundary geometry and data}
\begin{definition}[Pointwise $C^{1,\alpha}$ regularity on domains]\label{def:geometry}
Let  $\Omega$ be a bounded domain, $x_0\in\Gamma\subset\partial\Omega$. We say that $\Gamma$ is $C^{1,\alpha}$ at $x_0$ or $\Gamma\in C^{1,\alpha}(x_0)$ if there exist $K,r_0>0$ and a coordinate system $(x_1,x_2,\dots,x_n)$ such that $x_0=0$ in this coordinate system
and
\begin{equation}\label{eq:definition-domain-upper}
B_{r_0}\cap \{(x',x_n)\mid x_n > K|x'|^{1+\alpha}\} \subset B_{r_0}\cap \Omega
\end{equation}
and
\begin{equation}\label{eq:definition-domain-lower}
B_{r_0}\cap \{(x',x_n)\mid x_n < {-}K|x'|^{1+\alpha}\} \subset B_{r_0}\cap \Omega^c.
\end{equation}
Then, define
\[
\|\partial\Omega\|_{C^{1,\alpha}(x_0)} = \inf\big\{K \,\big|\, \text{\eqref{eq:definition-domain-upper} and \eqref{eq:definition-domain-lower} hold for } K\big\}.
\]
\end{definition}

\begin{definition}[Pointwise $C^{1,\alpha}$ regularity on functions]\label{def:function}
{For a bounded set $A\subset\R^n$ and a function $f:A\to\R$,
we write $f\in C^{1,\alpha}(x_0)$ if an affine polynomial $P$
and a constant $K$ satisfy
\begin{equation}\label{eq:definition-function-error}
|f(x)-P(x)|\le K|x-x_0|^{1+\alpha},\qquad x\in A.
\end{equation}
Here $x_0\in A$. For affine $P$, set
\[
\|P\|=|P(x_0)|+|DP|.
\]
Choose $P_0$ among the admissible polynomials so that
\[
\|P_0\|=\min\bigl\{\|P\|:\ \text{there is a }K
\text{ for which \eqref{eq:definition-function-error} holds}\bigr\}.
\]
The derivative and norms are defined by
\[
Df(x_0)=DP_0,\qquad
[f]_{C^{1,\alpha}(x_0)}
=\min\bigl\{K:\ \text{\eqref{eq:definition-function-error} holds with }P_0
\text{ and }K\bigr\},
\]
\[
\|f\|_{C^{1,\alpha}(x_0)}=\|P_0\|+[f]_{C^{1,\alpha}(x_0)}.
\]
This is the $k=1$ case of the definition in~\cite{LZ20}.}
\end{definition}

\begin{remark}
{In Definition~\ref{def:geometry}, we take $r_0=1$.} See 
\cite[Definitions 1.1--1.2]{LZ20} for more details.  
\end{remark}

\subsection{Ellipticity, Pucci operators and viscosity solutions}
\label{subsec:operators}

\begin{definition}[Uniform ellipticity]\label{ue}
An operator $F:\Sn\to\R$ is uniformly elliptic if there are
$0<\lambda\le\Lambda<\infty$ such that, for any $M\in\Sn$,
\begin{equation}\label{eq:ue}
\lambda\norm N\le F(M+N)-F(M)\le\Lambda\norm N,\ \ \forall N\ge0.
\end{equation}
We write $N\ge 0$ whenever $N$ is non-negative definite. Here $\norm N$ is equal to the {maximum eigenvalue} of $N$. 
\end{definition}

\begin{definition}[Pucci operators] 
For $M\in\Sn$ with eigenvalues $\mu_1,\ldots,\mu_n$, define
\[
\begin{aligned}
\Mp(M)&=\Lambda\sum_{\mu_i>0}\mu_i
              +\lambda\sum_{\mu_i<0}\mu_i,\\
\Mm(M)&=\lambda\sum_{\mu_i>0}\mu_i
              +\Lambda\sum_{\mu_i<0}\mu_i.
\end{aligned}
\]
\end{definition}
See \cite[Section~2]{CC95} for the above uniform ellipticity and Pucci conventions.

\begin{remark}
{Throughout} this paper, we assume that $F(0)=0$. By uniform ellipticity
 we have
\[\Mm(M)\le F(M)\le\Mp(M),\qquad
|F(M)-F(N)|\le n\Lambda\norm{M-N},\]
which {will be used frequently} in subsequent sections.
\end{remark}

We introduce the usual test-function definition for continuous equations
\cite[Section~2]{CIL92} and the nonzero-gradient convention for
singular equations \cite[Definition~2.1]{BD10}.

\begin{definition}[Viscosity solutions]\label{def:viscosity}
Let $u\in C(\Omega)$. We define solutions in the following two cases.

\noindent\textbf{(a) Continuous equations.}
{Let $H\in C(\Omega\times\mathbb R^n)$.}
We say that $u$ is a viscosity subsolution (supersolution) of
\[
F(D^2u)=H(x,Du)\qquad\text{in }\Omega
\]
if every $\varphi\in C^2(\Omega)$ such that $u-\varphi$ has a local
maximum (minimum) at $x_0\in\Omega$ satisfies
\[
F(D^2\varphi(x_0))\ge H(x_0,D\varphi(x_0))
\qquad(\text{resp. }\le).
\]
This condition applies to all test gradients, including
$D\varphi(x_0)=0$.

\smallskip\noindent\textbf{(b) Singular equations.}
{Let $-1<\gamma<0$ and $f\in C(\Omega)$.}
We say that $u$ is a singular viscosity subsolution (supersolution) of
\[
|Du|^\gamma F(D^2u)=f(x)\qquad\text{in }\Omega
\]
if, at each $x_0\in\Omega$, the following requirements hold:
\begin{enumerate}
\item[(i)]
If $u$ is not locally constant at $x_0$, every
$\varphi\in C^2(\Omega)$ such that $u-\varphi$ has a local maximum
(minimum) at $x_0$ and $D\varphi(x_0)\ne0$ satisfies
\[
|D\varphi(x_0)|^\gamma F(D^2\varphi(x_0))\ge f(x_0)
\qquad(\text{resp. }\le).
\]
\item[(ii)]
If $u$ is locally constant at $x_0$, there exists
$B_\delta(x_0)\subset\Omega$ on which $u$ is constant and
\[
0\ge f(x)\quad\text{for every }x\in B_\delta(x_0)
\qquad(\text{resp. }\le).
\]
\end{enumerate}
Only nonzero test gradients are used in (i); the locally constant
case is governed by (ii).

For either equation, a viscosity solution is both a subsolution and
a supersolution in the corresponding sense.
\end{definition}

{The following proposition was proved by Birindelli and
Demengel~\cite[Proposition~1.1]{BD15}. It allows us to pass from the
singular equation to a continuous equation.  For completeness, we give an alternative proof
of this known result, following the perturbation argument of
Imbert and Silvestre~\cite[Lemma~6]{IS13}.}

\begin{proposition}[{Continuous form of the singular equation}]
\label{prop:continuous}
{Let $F$ be uniformly elliptic with $F(0)=0$, let
$-1<\gamma<0$, and let $f\in C(\Omega)$.
Every singular viscosity solution of
\[
|Du|^\gamma F(D^2u)=f(x)\qquad\text{in }\Omega
\]
is a viscosity solution of
\[
F(D^2u)=f(x)|Du|^{-\gamma}\qquad\text{in }\Omega
\]
in the usual sense.}
\end{proposition}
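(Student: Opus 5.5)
By symmetry it suffices to treat the subsolution property; the supersolution case is identical with inequalities reversed. Fix $x_0\in\Omega$ and $\varphi\in C^2(\Omega)$ such that $u-\varphi$ has a local maximum at $x_0$. We must show
\[
F(D^2\varphi(x_0))\ge f(x_0)|D\varphi(x_0)|^{-\gamma}.
\]
The argument splits into three cases.

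\emph{Case 1: $u$ is locally constant at $x_0$.} Then $\varphi$ touches a constant from above at $x_0$. Hence $D\varphi(x_0)=0$ and $D^2\varphi(x_0)\ge0$, so $F(D^2\varphi(x_0))\ge F(0)=0$. Since $-\gamma>0$, the right side is $f(x_0)\cdot 0=0$, and the inequality holds. (Condition (ii) of Definition~\ref{def:viscosity} is not even needed here.)

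\emph{Case 2: $u$ is not locally constant at $x_0$ and $D\varphi(x_0)\ne0$.} Condition (i) gives $|D\varphi(x_0)|^\gamma F(D^2\varphi(x_0))\ge f(x_0)$. Multiplying by $|D\varphi(x_0)|^{-\gamma}>0$ yields the claim.

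\emph{Case 3: $u$ is not locally constant at $x_0$ and $D\varphi(x_0)=0$.} This is the main obstacle; here we follow Imbert--Silvestre~\cite[Lemma~6]{IS13}. We must show $F(D^2\varphi(x_0))\ge0$, since the right side vanishes. Argue by contradiction: suppose $F(D^2\varphi(x_0))<0$. We may first make the maximum strict by replacing $\varphi$ with $\varphi+|x-x_0|^4$, which leaves $D\varphi(x_0)$ and $D^2\varphi(x_0)$ unchanged. By continuity, $F(D^2\varphi(x))\le -c<0$ on a ball $B_r(x_0)$.

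\emph{Subcase 3a: for every small $e\in\R^n$, the maximum of $u-\varphi-e\cdot x$ over $\overline B_r(x_0)$ is attained at an interior point $x_e$ with $D\varphi(x_e)+e\ne0$.} The touching function $\varphi+e\cdot x$ has nonzero gradient at $x_e$. Provided $u$ is not locally constant at $x_e$, condition (i) gives $F(D^2\varphi(x_e))\ge f(x_e)|D\varphi(x_e)+e|^{-\gamma}$. As $e\to0$, strictness of the maximum forces $x_e\to x_0$ and $D\varphi(x_e)+e\to0$. Since $f$ is continuous and bounded near $x_0$ and $-\gamma>0$, the right side tends to $0$. This contradicts $F(D^2\varphi(x_e))\le -c$. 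If instead $u$ is locally constant at $x_e$, then $\varphi+e\cdot x$ touches a constant from above at $x_e$, so its gradient vanishes there, contradicting $D\varphi(x_e)+e\ne0$.

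\emph{Subcase 3b: for all small $e$, every maximizer $x_e$ satisfies $D\varphi(x_e)=-e$.} Use that $F(D^2\varphi)\le -c<0$ in $B_r(x_0)$ makes $\varphi$ a strict classical supersolution of $\Mm$. By uniform ellipticity, $D^2\varphi(x)$ has a negative eigenvalue of size at least $c/(n\Lambda)$ everywhere in $B_r$. Following~\cite{IS13}, the set of maximizers over all $e$ then covers an open neighborhood of $x_0$, and on this neighborhood $u-\varphi$ is affine in $e$-dependent pieces. More precisely, $u(x)=\max_{|e|\le\epsilon}\bigl(\varphi(x)+e\cdot x+c_e\bigr)$ locally, with each maximizer being a point where $D\varphi=-e$. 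I would try to conclude from this envelope representation that $u=\varphi+\text{const}$ near $x_0$ on a set forcing $u$ to be locally concave-like. This must be reconciled with the structure of the maximizers: either it forces $u$ to be locally constant at $x_0$, which is excluded, or it produces a point where a nonzero-gradient test applies, returning us to the contradiction of Subcase 3a.

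Making this dichotomy rigorous, in particular handling maximizers that are not isolated, is the delicate step. The fallback is to cite~\cite[Proposition~1.1]{BD15} directly for this subcase.
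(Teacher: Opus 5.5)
The proposal has a genuine gap. Your Cases 1 and 2 and Subcase 3a are correct, but Subcase 3b, which is where all the content of the proposition lies, is not proved.

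\textbf{What fails in Subcase 3b.} The assertions you make there are unjustified.
\begin{enumerate}
\item When $D^2\varphi(x_0)$ is singular, the map $x\mapsto D\varphi(x)$ need not be locally invertible. So nothing forces the zero-gradient maximizers $x_e$ to sweep out a neighborhood of $x_0$; having a negative eigenvalue of size $c/(n\Lambda)$ says nothing about this.
\item The envelope would be an infimum, not a maximum, since every $\varphi+e\cdot x+c_e$ lies above $u$.
\item This is not what Imbert and Silvestre do.
\item The fallback of citing Birindelli--Demengel's Proposition~1.1 is citing the statement itself.
\end{enumerate}
Also, 3a and 3b as written are not exhaustive. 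The correct split is ``there exist $e_k\to0$ for which some maximizer has $D\varphi(x_{e_k})+e_k\neq0$'' versus its negation.

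\textbf{The paper's argument.} The paper avoids any such dichotomy. Take $x_0=0$ and replace $\varphi$ by the quadratic $\tfrac12 x\cdot Ax$ with $A=D^2\varphi(0)+\tau I$ and $F(A)\le-c$. This quadratic touches $u$ strictly from above. $A$ must have a negative eigenspace $E$ (otherwise $F(A)\ge0$); let $P$ be the orthogonal projection onto $E$. Then maximize $u-\tfrac12x\cdot Ax+t|Px|$, attained at some $x_t\to0$.
\begin{enumerate}
\item If $Px_t\ne0$, the test $\tfrac12x\cdot Ax-t|Px|+M_t$ is $C^2$ near $x_t$. Its Hessian is $\le A$ because $|Px|$ is convex. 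Its gradient is nonzero, since its component along $Px_t$ is $Px_t\cdot APx_t-t|Px_t|<0$.
\item If $Px_t=0$, replace $|Px|$ by $e\cdot Px$ for a unit $e\in E$. This still gives a test from above, and its gradient has $E$-component $-te\neq0$.
\end{enumerate}
Either way you get a nonzero test gradient tending to $0$ with $F\le -c$, contradicting condition (i).

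\textbf{Closing your own route.} Your linear-perturbation approach can be completed. Also choose $\tau$ so that $A$ is invertible, and test with $\psi(x)=\varphi(x_0)+\tfrac12(x-x_0)\cdot A(x-x_0)$. In 3b, the only possible maximizer for $e$ is then $x_0-A^{-1}e$, and these points fill a neighborhood $U$ of $x_0$. Maximality gives $u(x)\le u(y)+\tfrac12(x-y)\cdot A(x-y)$ for $y\in U$. Exchanging $x,y\in U$ yields $|u(x)-u(y)|\le\tfrac12\|A\||x-y|^2$. Hence $u$ is constant on $U$, contradicting that $u$ is not locally constant at $x_0$.
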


\begin{proof}
{Let $\phi\in C^2(\Omega)$ touch $u$ from above at $x_0\in\Omega$.
If $D\phi(x_0)\ne0$, the singular viscosity inequality gives
\[
F(D^2\phi(x_0))\ge f(x_0)|D\phi(x_0)|^{-\gamma}.
\]
If $D\phi(x_0)=0$, it remains to prove $F(D^2\phi(x_0))\ge0$.
We may assume that $x_0=0$ and $u(0)=\phi(0)=0$.
Suppose, to the contrary,
that $D\phi(0)=0$ and $F(D^2\phi(0))<0$. By continuity of $F$, there
are $\tau,c>0$ such that
\begin{equation*}
A=D^2\phi(0)+\tau I,
\qquad F(A)\le-c<0.
\end{equation*}
  By Taylor's formula,
\[\phi(x)=\frac12 x\cdot D^2 \phi(0)x+o(|x|^2).\] 
By {choosing} $r$ small enough, we obtain 
\begin{equation*}
u(x)-\frac12 x\cdot Ax\le {\phi(x)}-\frac12 x\cdot Ax \le-\frac{\tau}{4}|x|^2,\quad x\in B_r\subset\subset\Omega;
\qquad \left(u(x)-\frac12 x\cdot Ax\right)(0)=0.
\end{equation*}
If $A\ge 0$, by ellipticity, we have $F(A)\ge F(0)+\lambda\tr A\ge 0$, which is a contradiction. 
Thus, $A$ has at least one  negative eigenvalue.
Let $E$ be the nontrivial sum of its negative
eigenspaces and let $P$ be the orthogonal projection onto $E$. Then
$AP=PA$, and the restriction of $A$ to $E$ is negative definite.
Choose $C_f\ge\sup_{B_r}|f|$.

For $0<t<\tau r/4$, let
\[\phi_t(x)=u(x)-\frac12 x\cdot Ax+t|Px|,\qquad 
{M_t=\max_{\overline B_r}\phi_t(x)=\phi_t(x_t).}\]
We have
\[\phi_t(x)\le -\tau r^2/4+tr<0\qquad \mathrm{in}\quad \partial B_r.\]
Since $\phi_t(0)=0$, we have $x_t\in B_r$ and $M_t={\phi_t(x_t)}\ge0$.
Moreover, we have
\[0\le-\tau|x_t|^2/4+t|Px_t|\le-\tau|x_t|^2/4+t|x_t|.\]
If $x_t\ne 0$, we divide by $|x_t|$ and obtain $|x_t|\leq 4t/\tau$. Hence,
\[
x_t\longrightarrow0\qquad x_t\in B_r.\]

In the following, we derive a contradiction by considering the two cases $Px_t\ne0$ and  $Px_t=0$.

\noindent \textbf{Case 1. $Px_t\ne0$.}

Set
\[\psi_t(x)=\frac12 x\cdot Ax-t|Px|+M_t.\] 
Then
\[u\le \psi_t,\qquad u(x_t)=\psi_t(x_t).\]
The function $\psi_t$ is $C^2$ near $x_t$, so it is an admissible local test.
Put $y_t=Px_t$. 
Since $y_t\in E$ and $AP=PA$,
\[
y_t\cdot D\psi_t(x_t)=y_t\cdot Ax_t-t|y_t|=y_t\cdot Ay_t-t|y_t|<0,
\]
so 
\[D\psi_t(x_t)\ne0.\]
 Note that the Hessian of $x\mapsto|Px|$ is positive
semidefinite at points with $Px\ne0$. Indeed, for $Px\ne0$,
we have
\[D|Px|=\frac{Px}{|Px|},\qquad D^2 |Px|=\frac{P}{|Px|}-\frac{Px\otimes Px}{|Px|^3}.\]
For any $\xi\in \mathbb{R}^n$, by the {Cauchy--Schwarz} inequality,
\[\xi\cdot D^2|Px|\xi=\frac{|P\xi|^2|Px|^2-(P\xi\cdot Px)^2}{|Px|^3}\ge 0.\]
Therefore
\[D^2 |Px|(x_t)\ge 0,\qquad
D^2\psi_t(x_t)\le A,
\qquad F(D^2\psi_t(x_t))\le F(A)\le-c.
\]
At the same time,
\[|D\psi_t(x_t)|\le\norm A|x_t|+t\to0.\]
 The nonzero-gradient viscosity inequality at $x_t$ gives
\begin{equation*}
-c\ge F(D^2\psi_t(x_t))
\ge-C_f|D\psi_t(x_t)|^{-\gamma}\longrightarrow0,
\end{equation*}
which is a contradiction.

\noindent \textbf{Case 2. $Px_t=0$.}

Fix a unit vector $e\in E$ and set
\[\psi_t(x)=\frac12 x\cdot Ax-t e\cdot Px+M_t.\]
 The inequality
$e\cdot Px\le|Px|$ shows 
\[\psi_t(x)\ge \frac12 x\cdot Ax-t |Px|+M_t\ge u(x).\]
Hence $\psi_t$ is still an upper test at $x_t$.
Now $D^2\psi_t=A$ and
\[
PD\psi_t(x_t)=PAx_t-te=APx_t-te=-te\ne0.
\]
Again the test {gradient is} nonvanishing  but tends to zero, while the viscosity inequality would give
\[
-c\ge-C_f|D\psi_t(x_t)|^{-\gamma}\longrightarrow0,
\] which is a contradiction.

Thus the subsolution inequality holds for every upper test.
Apply the same argument to $-u$, $\widehat F(M)=-F(-M)$, and $-f$
to obtain the supersolution inequality.
Since $(x,p)\mapsto f(x)|p|^{-\gamma}$ is continuous,
$u$ solves the continuous equation in the usual viscosity sense.}
\end{proof}

\begin{remark}\label{rem:one-way}
Only the implication from the singular equation to the continuous equation
is asserted. A constant function solves $F(D^2u)=f(x)|Du|^{-\gamma}$ for every
$f$ when $F(0)=0$, whereas the singular solution definition requires
$f=0$ on its constant region. The reverse implication therefore needs an
additional constant-branch condition.
\end{remark}

\subsection{Two classical regularity results}
We record exactly the external estimates used in the compactness argument.

\begin{proposition}[Interior H\"older compactness]\label{prop:holder}
Let $v$ be continuous and bounded by one in a ball. Suppose, in the
viscosity sense, that
\begin{equation*}
\Mm(D^2v)-|Dv|\le1,
\qquad \Mp(D^2v)+|Dv|\ge-1.
\end{equation*}
On every strictly smaller concentric ball, $v$ has a uniform
$C^\beta$ estimate for some $\beta\in(0,1)$. The exponent and constant
depend only on $n,\lambda,\Lambda$ and the two radii.
\end{proposition}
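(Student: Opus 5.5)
This proposition is the Krylov--Safonov Hölder estimate for the class $S^*$ with a first-order term, as in Caffarelli--Cabré~\cite{CC95} and its drift extensions. Our plan is to reduce it to the standard $L^\varepsilon$ (weak Harnack) estimate and the local maximum principle for viscosity supersolutions and subsolutions of Pucci inequalities with a bounded gradient term, and then iterate oscillation decay. By translation and scaling we work on $B_1$ and prove a $C^\beta$ bound on $B_{1/2}$; the general pair of radii follows by covering. For the radius-$\rho$ rescaling $v_\rho(x)=v(\rho x)$, the inequalities become $\Mm(D^2v_\rho)-\rho|Dv_\rho|\le\rho^2$ and $\Mp(D^2v_\rho)+\rho|Dv_\rho|\ge-\rho^2$. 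For $\rho\le1$ the drift coefficient and the right-hand side therefore only shrink, so every small-scale step takes place inside a fixed class with drift coefficient at most $1$ and right-hand side at most $1$.

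The key step is a one-step oscillation decay. If $\operatorname{osc}_{B_1}v\le 2$, then
\[
\operatorname{osc}_{B_{1/4}}v\le (1-\theta)\operatorname{osc}_{B_1}v+C\rho^2 .
\]
Set $M=\sup_{B_1}v$ and $m=\inf_{B_1}v$. Then $M-v\ge0$ is a supersolution of $\Mm(D^2w)-|Dw|\le1$, and $v-m\ge0$ is a supersolution of the same type. One of the two sets $\{v\ge (M+m)/2\}$ and $\{v\le(M+m)/2\}$ occupies at least half the measure of a cube of fixed size. Applying the weak Harnack inequality
\[
\Bigl(\fint_{Q}w^{\varepsilon}\Bigr)^{1/\varepsilon}\le C\bigl(\inf_{Q'}w+1\bigr)
\]
to the corresponding nonnegative supersolution gives the decay. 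Iterating on dyadic scales, with the rescaled right-hand side at scale $4^{-k}$ of size $4^{-2k}$, yields
\[
\operatorname{osc}_{B_{4^{-k}}}v\le C4^{-k\beta},\qquad \beta=\min\{\log_4(1/(1-\theta)),1\}/2 .
\]
The same argument centered at each point of $B_{1/2}$ gives the uniform $C^\beta$ estimate. The resulting constants depend only on $n,\lambda,\Lambda$.

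The main obstacle is the weak Harnack inequality with the term $|Dw|$, since \cite{CC95} treats only the drift-free case. We would first invoke the ABP estimate with bounded drift, which holds with constant depending on $n,\lambda,\Lambda$ and the bound $\|b\|_{L^n}$, finite here because the coefficient is $1$ on a unit ball. This is the setting of Koike--Świȩch and Sirakov. With ABP in hand, the barrier-plus-Calderón--Zygmund cube decomposition of \cite[Ch.~4]{CC95} goes through essentially unchanged. The barrier $\Phi$ there is radial with $\Mp(D^2\Phi)\le-c$ outside a small ball. Steepening its exponent absorbs the extra term $+|D\Phi|$ on the unit scale, and the scaling remark above keeps smaller cubes within the same regime. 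If a self-contained route is undesirable, we would simply cite these known results, since the proposition is classical.
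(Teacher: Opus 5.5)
Your proposal is correct and takes the same route as the paper. The paper gives no argument of its own; it appeals to the interior Krylov--Safonov estimate with bounded first-order terms (Caffarelli--Cabr\'e, and the Pucci framework of Silvestre--Sirakov). Your sketch is the standard proof behind that citation: ABP with bounded drift, the Caffarelli--Cabr\'e barrier with a steepened exponent so that $\mathcal M^+_{\lambda,\Lambda}(D^2\Phi)+|D\Phi|\le0$ on the bounded region where it is used (outside a small ball), the $L^\varepsilon$ estimate applied to the nonnegative supersolutions $M-v$ and $v-m$, and dyadic oscillation decay using that rescaling only shrinks the drift and the right-hand side.
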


This is the interior Krylov--Safonov estimate with bounded first-order
terms in the viscosity setting; see \cite{CC95} and the Pucci framework
in \cite{SS14}.  

Finally, since our method treats {a nonsmooth} boundary as a perturbation of a flat boundary, we conclude this section with the following lemma  {on} pointwise boundary $C^{2,\bar\alpha}$ estimates for $F(D^2 u) = 0$ on flat boundaries with zero boundary values. See Lemma 4.1 in \cite{SS14} and Lemma 2.2 in \cite{LZ20}.

\begin{proposition}[Boundary $ C^{2,\bar\alpha}$ regularity for $F(D^2 u) = 0$ on flat domains]
\label{prop:flat}
Let $F$ be uniformly elliptic with $F(0)=0$. Let  $u$ be a viscosity solution of the equation
\begin{equation}\label{u00}
	\begin{cases}
		F(D^2 u) = 0 & \text{in } \ B_1^+, \\
		u = 0 & \text{on }\  T_1.
	\end{cases}
\end{equation}
	Then $u \in C^{2,\bar\alpha}(0)$ and, for some constants $a$ and $b_{in}$ ($1 \le i \le n$),
\[
\big| u(x) - a x_n - {\sum_{i=1}^n b_{in} x_i x_n} \big| \le \bar C |x|^{2+\bar\alpha} \|u\|_{L^\infty(B_1^+)},\quad \forall\, x \in B_{1/2}^+,  
\]
\[
{F\!\left(D^2\!\left(\sum_{i=1}^n b_{in}x_ix_n\right)\right)=0}  
\]
and
\[
|a| + |b_{in}| \le \bar C\|u\|_{L^\infty(B_1^+)},
\]
{where} $\bar\alpha$ and $\bar C$ depend on $n,\lambda$ and $\Lambda$.
\end{proposition}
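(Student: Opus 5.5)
The plan is to build the expansion in stages: first the linear term from boundary $C^{1,\beta}$ regularity, then the mixed terms $x_ix_n$ ($i<n$) by applying the same result to tangential difference quotients, and finally the $x_n^2$ term by a compactness argument.

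\textbf{Step 1 (linear term).} Normalize $\|u\|_{L^\infty(B_1^+)}=1$. Since $F(0)=0$, $u$ lies in the class $\underline S\cap\overline S(\lambda,\Lambda,0)$. Krylov's boundary estimate on flat boundaries (boundary Harnack applied to $u/x_n$) gives $u\in C^{1,\beta}(\overline{B^+_{3/4}})$ with a universal $\beta$. Its key output is the pointwise expansion
\[
|u(x)-a(y)\,x_n'|\le C|x-y|^{1+\beta}\quad\text{near each }y\in T_{3/4},
\]
where $x_n'$ denotes the normal coordinate, $a(y)=u_n(y)$ is $C^\beta$ in $y$, and $|a|\le C$.

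\textbf{Step 2 (mixed terms).} Because $F$ depends only on $D^2u$ and the boundary is flat with zero data, the tangential difference quotients
\[
w_h=\frac{u(x+he_i)-u(x)}{h},\qquad i<n,
\]
satisfy
\[
\Mm(D^2w_h)\le0\le\Mp(D^2w_h)\ \text{in }B^+_{3/4-h},\qquad w_h=0\ \text{on the flat part}.
\]
By Step 1, $|w_h|\le C$ uniformly in $h$. Applying Krylov's estimate to $w_h$ and letting $h\to0$ gives $u_i\in C^{1,\beta}$ up to $T_{1/2}$. In particular,
\[
u_i(x)=c_i x_n+O(|x|^{1+\beta}),\qquad |c_i|\le C,
\]
with $c_i=u_{in}(0)$. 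Integrating in $x'$ along the boundary direction yields
\[
u(x)=a x_n+\sum_{i<n}c_i x_i x_n+R(x),
\]
where $R$ has tangential gradient $O(|x|^{1+\beta})$. One can also iterate the difference-quotient argument once more to obtain $D_{x'}^2u\in C^{\beta}$ and hence $D_{x'}u_{x'}$ of order $|x|^{\beta}x_n$.

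\textbf{Step 3 (the $x_n^2$ term) — the main obstacle.} The problem is that $u_{nn}$ cannot be obtained by differentiating in the normal direction. I would use the equation to recover it via a blow-up/compactness argument, proving a one-scale improvement. Set
\[
v_r(x)=\frac{u(rx)-a r x_n-r^2\sum_{i<n}c_i x_i x_n}{r^2}.
\]
Then $v_r$ solves $G_r(D^2v_r)=0$, where $G_r(M)=F(M+Q)$ and $Q$ is the fixed matrix of the mixed terms; $G_r$ is uniformly elliptic. Moreover $v_r=0$ on $T_1$ and $|D_{x'}v_r|\le Cr^{\beta}$. Suppose the improvement fails. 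Taking a limit (via Proposition~\ref{prop:holder} together with the boundary estimates, and stability of viscosity solutions) produces a function $v$ independent of $x'$, vanishing on $\{x_n=0\}$, and solving $G(v''\,e_n\otimes e_n)=0$. Since $t\mapsto G(t\,e_n\otimes e_n)$ is strictly increasing, $v''$ equals the unique root $2b_{nn}$, so $v=b_{nn}x_n^2+\tilde a x_n$. This contradicts the failure of approximation and gives, at a small universal scale $\rho$, a polynomial
\[
P=a x_n+\sum_i b_{in}x_ix_n\quad\text{with}\quad F(D^2P)=0
\]
and error at most $\rho^{2+\bar\alpha}$.

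\textbf{Step 4 (iteration and bounds).} Since $P$ is itself a solution with zero data on the flat part, $u-P$ solves the same type of problem with operator $M\mapsto F(D^2P+M)$. I then iterate the one-scale lemma on dyadic scales $\rho^k$, using Step 2 to control the tangential terms at each scale. This produces polynomials $P_k$ converging geometrically, giving the $|x|^{2+\bar\alpha}$ error. The coefficient bounds $|a|+|b_{in}|\le\bar C$ follow from summing the geometric corrections.
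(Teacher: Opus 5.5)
The paper does not prove this proposition. It quotes it from \cite[Lemma~4.1]{SS14} and \cite[Lemma~2.2]{LZ20}, so your argument has to stand on its own. Steps~1--2 are sound, with one caveat. Krylov's estimate for the class $S$ only gives expansions at boundary points. The up-to-the-boundary $C^{1,\beta}$ bound you need for $w_h\to u_i$ also uses the interior $C^{1,\alpha}$ estimate for the constant-coefficient equation $F(D^2u)=0$.

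The gap is in Step~3. Write $R=u-ax_n-\sum_{i<n}c_ix_ix_n$. Steps~1--2 give only $|R(x)|\le C|x|^{1+\beta}$, so $v_r=R(r\,\cdot)/r^2$ satisfies merely $\|v_r\|_{L^\infty(B_1^+)}\le Cr^{\beta-1}$. A uniform bound on $v_r$ is essentially the quadratic growth you are trying to prove. So there is no bounded family from which to extract the limit $v$. Moreover, the ``improvement'' is never stated with hypotheses that are checked at every scale.

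What is missing is the right normalization and the bookkeeping that keeps the tangential hypothesis alive.
\begin{enumerate}
\item Remove the $x_n^2$ term first. Let $t_*$ be the root of $F(Q+t_*e_n\otimes e_n)=0$ and set $W=(R-\tfrac{t_*}{2}x_n^2)/N$. Then $W$ solves $G(D^2W)=0$ with $G(0)=0$.
\item State the one-step lemma as follows. If $|W|\le1$ in $B_1^+$, $W=0$ on $T_1$, $G(0)=0$ and $|D_{x'}W|\le\varepsilon$, then $|W-\tilde a x_n|\le\rho^{2+\bar\alpha}$ in $B_\rho^+$ for some bounded $\tilde a$. Your compactness argument proves this, since the limit profile is now linear.
\item Iterate with $W_k(x)=(W-A_kx_n)(\rho^kx)/\rho^{k(2+\bar\alpha)}$. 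One gets $|D_{x'}W_k|\le C\rho^{k(\beta-\bar\alpha)}/N$. So the tangential hypothesis survives at every scale only if you choose $\bar\alpha<\beta$ and $N\ge C/\varepsilon$. This is the precise content of ``using Step~2 at each scale,'' and it is what fixes $\bar\alpha$.
\item Finally, $A_k\to A_\infty$, and $A_\infty=0$ because $W=O(|x|^{1+\beta})$.
\end{enumerate}

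Alternatively, you can bypass compactness entirely. By Step~2, $x'\mapsto u_n(x',0)$ is $C^{1,\beta}$ with derivatives $u_{in}(x',0)$. Also, $u_n$ lies in the class $S$ as a limit of normal difference quotients. So Krylov's boundary estimate with $C^{1,\beta}$ data gives $u_n\in C^{1,\beta'}(0)$ directly. Integrating $Du$ along rays then yields $P$, and $F(D^2P)=0$ follows by blow-up.
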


\begin{remark}\label{re2}
It follows immediately that if $u$ satisfies \eqref{u00}, then
$$ 
\big| u(x) - a x_n \big| \le \bar C |x|^{2} \|u\|_{L^\infty(B_1^+)},\quad \forall\, x \in B_{1/2}^+.$$
\end{remark}

\section{\texorpdfstring{Boundary pointwise $C^{1,\alpha}$ regularity}{Boundary pointwise C1,alpha regularity}}\label{sec:compact}

In the first two subsections, $F$ is uniformly
elliptic with $F(0)=0$, and $H(x,p)$ is continuous.
We consider viscosity solutions of
\begin{equation}\label{eq:small-growth-equation}
F(D^2 u)=H(x,Du).
\end{equation}

\subsection{An explicit barrier in a boundary strip}

\begin{lemma}[Boundary-strip estimate]\label{lem:barrier}
There are universal constants $\varepsilon_b>0$ and $K_b>0$ with the
following property. Let $\Omega$ be open and satisfy 
\begin{equation}\label{eq:slab}
B_1\cap\{x_n>\sigma\}\subset\Omega\cap B_1\subset B_1\cap\{x_n>-\sigma\}
\qquad \mathrm{for}\quad 0\le\sigma\le1/16.
\end{equation}
Let $u\in C(\overline{\Omega}\cap B_1)$ satisfy
\begin{equation}\label{eq:barrier-problem}
F(D^2u)=H(x,Du)\quad\text{in }\Omega\cap B_1,
\end{equation}
and assume that, for some $m\ge0$,
\[
\norm u_{L^\infty(\Omega\cap B_1)}\le1,
\qquad |H(x,p)|\le\varepsilon_b(1+|p|),
\]
\[
|u|\le m\quad\text{on }\partial\Omega\cap B_1.
\]
Then
\begin{equation*}
|u(x)|\le m+K_b(x_n+\sigma),\qquad  x\in\Omega\cap\{|x'|\le1/2,0<x_n+\sigma<1/4\}.
\end{equation*}
\end{lemma}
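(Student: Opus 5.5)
We prove the upper bound $u\le m+K_b(x_n+\sigma)$ with an explicit supersolution barrier and the comparison principle. The lower bound follows by applying the same argument to $-u$, which solves $\widehat F(D^2(-u))=-H(x,-D(-u))$ with $\widehat F(M)=-F(-M)$; the operator $\widehat F$ has the same ellipticity constants, and the right-hand side obeys the same growth bound. Shift coordinates by setting $t=x_n+\sigma$. Then $\Omega\cap B_1\subset\{t>0\}$, and the region in the statement is $D:=\Omega\cap\{|x'|<1/2+\eta,\ 0<t<1/4+\eta\}$ for a small fixed $\eta$ (we may equally work on the slightly larger box and restrict afterwards). Since $\sigma\le1/16$, this box lies inside $B_1$, so $u$ is defined on $\overline D$.

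The barrier is
\[
w(x)=m+K t-\tfrac{M}{2}t^2+A|x'|^2 ,
\]
with $t=x_n+\sigma$, or a variant in which the quadratic in $t$ is replaced by $K(1-e^{-\mu t})/\mu$ to gain more concavity. The constants $M,A,K$ are universal. The verification has three steps.

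\textbf{(a) Supersolution property.} We need $\Mp(D^2w)<-\varepsilon_b(1+|Dw|)$, which by ellipticity gives $F(D^2w)<H(x,Dw)$. We have $D^2w=\diag(2A,\dots,2A,-M)$, so $\Mp(D^2w)=2(n-1)\Lambda A-\lambda M$. Also $|Dw|\le K+M+A$ on the region. Fix $A$ first (large, for the lateral boundary), then take $M\ge 4(n-1)\Lambda A/\lambda+1$. Next choose $K$ (for the top), and finally choose $\varepsilon_b$ so small that $\varepsilon_b(1+K+M+A)<\lambda M/2$.

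\textbf{(b) Boundary comparison.} We check $w\ge u$ on $\partial D$, piece by piece.
\begin{itemize}
\item On $\partial\Omega\cap B_1$ we have $t\ge0$ and $t\le 1/4+\eta$. Require $K\ge M$, so that $Kt-\tfrac M2t^2\ge0$ for $t\le1$. Then $w\ge m\ge u$.
\item On the lateral face $|x'|=1/2+\eta$ we have $w\ge m+A/4\ge1\ge u$ once $A\ge4$.
\item On the top $t=1/4+\eta$ we have $w\ge K/4-M/32\ge1$ for $K$ large.
\end{itemize}
Hence $u\le w$ on $\partial D$. The comparison principle for $F(D^2u)=H(x,Du)$, with $w$ a strict classical supersolution, yields $u\le w$ in $D$. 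No general comparison theorem is needed: if $u-w$ had a positive interior maximum, then $w$ plus a constant would touch $u$ from above there. The subsolution inequality would give $F(D^2w)\ge H(x,Dw)$, contradicting the strict inequality from (a).

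\textbf{(c) Removing the $|x'|^2$ term.} The bound we now have is $u\le m+Kt+A|x'|^2$. To obtain the stated form, fix a point $x_0$ with $|x_0'|\le1/2$. Use the translated barrier $w_{x_0}=m+Kt-\tfrac M2t^2+A|x'-x_0'|^2$ on the box $\{|x'-x_0'|<1/2\}$, with $A$ chosen large enough that $w_{x_0}\ge1$ on the lateral face $|x'-x_0'|=1/2$. This box stays inside $B_1$ because $|x'|<1$ and $t<1/3$. Evaluating at $x'=x_0'$ gives $u(x_0)\le m+Kt$. This is where the constraint $|x'|\le1/2$ enters.

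The main obstacle is the order of choosing the constants. $A$ is fixed by the lateral face, $M$ must dominate $A$ in $\Mp$, $K$ must dominate $M$ on the top and bottom, and only then is $\varepsilon_b$ chosen. The gradient growth $\varepsilon_b|p|$ is harmless only because $|Dw|$ is bounded by these already fixed universal constants, so a small $\varepsilon_b$ absorbs it. Throughout, the only information about $\Omega$ used is that $t>0$ on $\Omega$ and $t\le1/4+\eta$ on the relevant part of the boundary.
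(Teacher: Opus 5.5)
Your argument is essentially the paper's. You use a quadratic barrier $m+Kt-\tfrac M2t^2+A|x'-z'|^2$ in $t=x_n+\sigma$, made a strict supersolution through the Pucci maximal operator with $\varepsilon_b$ absorbing the bounded gradient. You compare it with $u$ by touching at an interior maximum and evaluate at $x'=z'$. The paper works only with the translated barrier, so your steps (a)--(b) on the untranslated box are unnecessary. It also takes $A=16$ and ties $M=4(K_b-4)$ to $K_b$. Finally, it gets nonnegativity of the $t$-part from $t\le1/4$ rather than from $K\ge M$. These differences are cosmetic.

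One step fails as written, however. In (c), the box $\{|x'-x_0'|<1/2,\ 0<t<1/4+\eta\}$ is not contained in $B_1$, because $|x'|<1$ and $t<1/3$ do not imply $|x|<1$. For example, take $\sigma=0$, $x_0'=(1/2,0,\dots,0)$, $x'=(0.99,0,\dots,0)$ and $x_n=1/5$. Then $|x'-x_0'|<1/2$ and $t<1/4$, but $|x|^2>1$. So $D$ may contain points outside $B_1$. There $u$ is not defined, and neither $|u|\le1$ nor the slab inclusion for $\Omega$ is available, so the comparison on $\overline D$ is not justified.

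The fix is to shrink the lateral radius, as the paper does. On $\{|x'-x_0'|\le1/4,\ 0\le t\le1/4\}$ one has $|x'|\le3/4$ and $-1/16\le x_n\le1/4$, so $|x|^2\le10/16$. Taking $A\ge16$ gives $w_{x_0}\ge1$ on the lateral face, and the rest of your ordering of constants ($A$, then $M$, then $K$, then $\varepsilon_b$) goes through unchanged.
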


\begin{proof}
Choose
\[
K_b=4+\frac{8\Lambda(n-1)+1}{\lambda},
\qquad \varepsilon_b=\frac{1}{K_b+9}.
\]
Fix $z'\in\overline B_{1/2}'$ and set
\[
D=\Omega\cap
\{x:|x'-z'|<1/4,\ 0<x_n+\sigma<1/4\}.
\]
Since $|x'|\le3/4$ and $|x_n|\le1/4$ on $\overline D$,
we have $\overline D\subset B_1$. Set
\[
v(x)=m+K_b(x_n+\sigma)
      -2(K_b-4)(x_n+\sigma)^2+16|x'-z'|^2.
\]
In the following, we prove that $v$ provides an upper barrier while  $-v$ provides a lower barrier.

We verify the boundary comparison in four parts. First, on
$\overline D$ we have $0\le x_n+\sigma\le1/4$ and $K_b>4$, so
\[
\begin{aligned}
&K_b(x_n+\sigma)-2(K_b-4)(x_n+\sigma)^2\\
&\qquad=(x_n+\sigma)
       \bigl[K_b-2(K_b-4)(x_n+\sigma)\bigr]\\
&\qquad\ge\left(\frac{K_b}{2}+2\right)(x_n+\sigma)\ge0.
\end{aligned}
\]
Continuity also gives $|u|\le1$ on $\overline D$.
\begin{enumerate}
\item \textit{Lateral boundary.}
If $x\in\partial D$ and $|x'-z'|=1/4$, we have
$16|x'-z'|^2=1$ and then
\[
v(x)\ge m+1\ge1\ge|u(x)|\qquad\mathrm{on}\quad \partial D\cap\{|x'-z'|=1/4\}.
\]

\item \textit{Top boundary.}
If $x\in\partial D$ and $x_n+\sigma=1/4$, then
\[
v(x)=m+\frac{K_b}{8}+\frac12+16|x'-z'|^2
      \ge1\ge|u(x)| \qquad\mathrm{on}\quad \partial D\cap\{x_n+\sigma=1/4\},
\]
where we used $K_b>4$ and $m\ge0$.

\item \textit{The original domain boundary.}
If $x\in\partial D\cap\partial\Omega$, then   $|u(x)|\le m$.
Hence
\[
v(x)\ge m\ge|u(x)|\qquad\mathrm{on}\quad \partial D\cap\partial\Omega.
\]

\item \textit{Bottom boundary.}
If $x\in\partial D$ and  $x_n+\sigma=0$,
since $x\in\overline D\subset\overline{\Omega}\cap B_1$
and $\Omega  \cap B_1\subset\{x_n>-\sigma\}$,
we have $x\notin\Omega$ and therefore $x\in\partial\Omega$.
Hence \[
v(x)=m+16|x'-z'|^2\ge m\ge|u(x)|\qquad\mathrm{on}\quad \partial D\cap\{x_n+\sigma=0\}.
\]
\end{enumerate}
Consequently,
\[
-v\le u\le v\qquad\mathrm{on}\quad\partial D.
\]

Direct calculation gives
\[Dv=(32(x'-z'),K_b-4(K_b-4)(x_n+\sigma)),\qquad
D^2v=\diag(32,\ldots,32,-4(K_b-4)).
\]
Since $|D_{x'}v|\le8$ and
$4\le\partial_nv\le K_b$ in $D$, we obtain
\[|Dv|\le K_b+8\qquad \mathrm{in}\quad D\]
and then
\[|H(x,\pm Dv)|\leq \varepsilon_b(1+|Dv|)\leq \varepsilon_b(9+K_b)=1.\]
By ellipticity,
\begin{align*}
F(D^2v)\le\Mp(D^2v)
=32\Lambda(n-1)-4\lambda(K_b-4)
&=-4< H(x,Dv),\\
  F(-D^2v)\ge\Mm(-D^2v)=-\Mp(D^2v)
=4&>  H(x,-Dv).
\end{align*}
Hence $v$ and $-v$ are smooth strict supersolutions and
subsolutions, respectively. The comparison principle with
smooth strict barriers yields
\[
-v\le u\le v\qquad\text{in }D.
\]
For each point under consideration, choose $z'=x'$.
The term $16|x'-z'|^2$ vanishes, and therefore
\[
|u(x)|\le v(x)\le m+K_b(x_n+\sigma).\qedhere
\]
\end{proof}

\begin{remark}
The above lemma presents a uniform estimate for solutions, which is a kind of "equicontinuity" up to the boundary.
\end{remark}

\subsection{Compactness on varying domains}

\begin{lemma}[One-step affine approximation]\label{lem:one-step}
Fix $0<\alpha<1$ and set $C_0=\max\{1,4\bar C\}$, where $\bar C$ is the universal constant in Proposition~\ref{prop:flat}. There exist
\[
\eta\in(0,1/8),\quad \eta^\alpha\le1/2,\qquad
\varepsilon_0\in(0,\min\{1/16,\varepsilon_b\}],
\]
depending only on $n,\lambda,\Lambda,\alpha$, such that the following
holds. Assume \eqref{eq:slab} with $0\le\sigma\le\varepsilon_0$.
Let $u\in C(\overline{\Omega}\cap B_1)$ satisfy
\begin{equation}\label{eq:one-step-problem}
F(D^2u)=H(x,Du)\quad\text{in }\Omega\cap B_1,
\end{equation}
and assume that
\[
\norm u_{L^\infty(\Omega\cap B_1)}\le1,
\qquad |H(x,p)|\le\varepsilon_0(1+|p|),
\]
\[
|u|\le\varepsilon_0\quad\text{on }\partial\Omega\cap B_1.
\]
Then there exists $c\in\R$ such that
\begin{equation}\label{eq:one-step-result}
|c|\le C_0,
\qquad
\norm{u-cx_n}_{L^\infty(\Omega\cap B_\eta)}\le\eta^{1+\alpha}.
\end{equation}
\end{lemma}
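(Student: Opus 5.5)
We argue by compactness and contradiction. First we fix $\eta$. By Remark~\ref{re2}, any solution $w$ of \eqref{u00} with $\|w\|_{L^\infty(B_1^+)}\le 1$ satisfies $|w-ax_n|\le\bar C|x|^2$ on $B_{1/2}^+$ with $|a|\le\bar C$. We will apply this to a limit function defined on $B_{1/2}^+$ after rescaling, which is why we take $C_0=\max\{1,4\bar C\}$ to absorb the rescaling factors. We then choose $\eta<1/8$ so small that $C\bar C\eta^{2}\le\frac12\eta^{1+\alpha}$ (where $C$ is the rescaling constant) and $\eta^\alpha\le 1/2$. This is possible because $1+\alpha<2$.

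Now suppose the lemma fails for this $\eta$. Then there are sequences $\sigma_k\to0$, domains $\Omega_k$ satisfying \eqref{eq:slab} with $\sigma_k$, functions $H_k$ with $|H_k(x,p)|\le\frac1k(1+|p|)$, and solutions $u_k$ with $\|u_k\|_\infty\le1$ and $|u_k|\le 1/k$ on $\partial\Omega_k\cap B_1$, such that no $c$ with $|c|\le C_0$ gives \eqref{eq:one-step-result}.

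\textbf{Equicontinuity.} In the interior, each $u_k$ satisfies $\Mm(D^2u_k)-\frac1k|Du_k|\le \frac1k$ and the symmetric inequality. Proposition~\ref{prop:holder} therefore gives uniform $C^\beta$ bounds on compact subsets of $\{x_n>\sigma_k\}\cap B_1$. Near the boundary, Lemma~\ref{lem:barrier} (valid since $1/k\le\varepsilon_b$) gives $|u_k(x)|\le \frac1k+K_b(x_n+\sigma_k)$ in the strip. Combining this boundary modulus with the interior Hölder estimates in the standard way gives a uniform modulus of continuity on $\overline{B^+_{1/2}}\cap\{x_n\ge 2\sigma_k\}$, up to $\{x_n=0\}$ in the limit. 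By Arzelà--Ascoli and a diagonal argument, a subsequence converges locally uniformly on $B_{1/2}^+$ to some $w$, which is continuous on $\overline{B_{1/2}^+}$. The barrier bound forces $|w|\le K_bx_n$, so $w=0$ on $T_{1/2}$.

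\textbf{Limit equation.} By stability of viscosity solutions under uniform convergence, and since $H_k\to0$ locally uniformly in $(x,p)$, the limit $w$ solves $F(D^2w)=0$ in $B_{1/2}^+$ with $\|w\|\le1$. Rescaling Proposition~\ref{prop:flat} (Remark~\ref{re2}) to $B_{1/2}^+$ gives $|w-ax_n|\le 4\bar C|x|^2$ on $B_{1/4}^+$ with $|a|\le C_0$. Hence $\|w-ax_n\|_{L^\infty(B_\eta^+)}\le 4\bar C\eta^2\le\frac12\eta^{1+\alpha}$.

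\textbf{Transfer back to $u_k$.} On $\Omega_k\cap B_\eta\cap\{x_n\ge\tau\}$, uniform convergence gives $|u_k-ax_n|<\eta^{1+\alpha}$ for large $k$. In the thin region $-\sigma_k<x_n<\tau$, we use the barrier bound $|u_k|\le\frac1k+K_b(x_n+\sigma_k)$ together with $|ax_n|\le C_0\tau$. By choosing $\tau$ small and $k$ large, the error there is also below $\eta^{1+\alpha}$. This contradicts the choice of $u_k$ with $c=a$.

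The main obstacle is this last step, together with the equicontinuity near the moving boundary. The convergence is only available away from $\{x_n=0\}$, so the strip estimate of Lemma~\ref{lem:barrier} must carry both the identification $w|_{T}=0$ and the control in the thin layer. This is exactly where the explicit linear bound $K_b(x_n+\sigma)$ is essential.
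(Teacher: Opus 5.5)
You follow the paper's route: compactness by contradiction, Lemma~\ref{lem:barrier} used twice (to get $|w|\le K_bx_n$, hence zero boundary values, and to control the thin layer $-\sigma_j<x_n<\tau$), Proposition~\ref{prop:flat} rescaled to $B_{1/2}^+$, and a two-region transfer. There is, however, a gap in the contradiction setup. You keep the operator $F$ fixed along the sequence; your limit solves ``$F(D^2w)=0$''. The lemma asserts that $\eta,\varepsilon_0$ depend only on $n,\lambda,\Lambda,\alpha$, i.e.\ they are uniform over all uniformly elliptic $F$ with $F(0)=0$, so its negation also produces a sequence of operators $F_j$. With $F$ fixed, your argument only gives $\varepsilon_0=\varepsilon_0(F)$. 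That is not enough. In the proof of Theorem~\ref{thm:main}, the lemma is applied at step $k$ to $F_k(X)=r^{1-\alpha}F_\rho(r^{\alpha-1}X)$ with $r=\eta^k$. This operator changes with $k$, and $F_\rho$ itself depends on $M$ and $\rho$, i.e.\ on the data. An $F$-dependent $\varepsilon_0$ could therefore degenerate along the iteration.

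The repair is what the paper does in its Step~1: include $F_j$ in the contradicting sequence. Since $F_j(0)=0$ and $|F_j(M)-F_j(N)|\le n\Lambda\norm{M-N}$, Arzel\`a--Ascoli gives, along a subsequence, $F_j\to F_\infty$ locally uniformly on $\Sn$, with $F_\infty(0)=0$ and the same ellipticity constants. In the stability step you then need $F_j(D^2\psi(x_j))\to F_\infty(D^2\phi(x_0))$, which follows from the uniform Lipschitz bound together with local uniform convergence. Proposition~\ref{prop:flat} then applies to $F_\infty$ with the same $\bar C$. The rest of your argument (Proposition~\ref{prop:holder}, Lemma~\ref{lem:barrier}) uses only Pucci bounds and is already uniform in $F$.

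A minor point: the equicontinuity up to the boundary that you sketch is not needed. Local uniform convergence in $B_1^+$, together with passing the pointwise bound $|u_j(x)|\le \varepsilon_j+K_b(x_n+\sigma_j)$ to the limit, already gives $|w|\le K_bx_n$. This is how the paper proceeds.
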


\begin{proof}
We argue by contradiction. Suppose that the conclusion fails. Then
there exist sequences
$\Omega_j,\sigma_j,F_j,H_j,u_j$ such that
\begin{equation}\label{eq:compactness-problems}
F_j(D^2u_j)=H_j(x,Du_j)\quad\text{in }\Omega_j\cap B_1
\end{equation}
with
\[0\le \sigma_j\le \varepsilon_j\searrow 0,\qquad \varepsilon_j\le \min\{1/16,\varepsilon_b\},\qquad B_1\cap\{x_n>\sigma_j\}\subset\Omega_j\cap B_1\subset B_1\cap\{x_n>-\sigma_j\} \]
and
\[
\norm {u_j}_{L^\infty(\Omega_j\cap B_1)}\le1,
\qquad |H_j(x,p)|\le\varepsilon_j(1+|p|),
\]
\[
|u_j|\le\varepsilon_j\quad\text{on }\partial\Omega_j\cap B_1,
\]
but
\begin{equation}\label{eq:but}
\|u_j-cx_n\|_{L^\infty(\Omega_j\cap B_\eta)}
>\eta^{1+\alpha}\qquad\text{for every }|c|\le  C_0,
\end{equation}
where $\eta\in(0,1/8)$ is taken small such that
\[
C_0\eta^2\le\displaystyle{\frac1{4}}\eta^{1+\alpha},
\qquad \eta^\alpha\le\displaystyle{\frac1{2}}.
\]

In the following, we proceed in four steps to obtain a contradiction.

\smallskip\noindent\textbf{Step 1: Interior compactness.}
By uniform ellipticity and $F_j(0)=0$,
\[
|F_j(M)-F_j(N)|\le n\Lambda\|M-N\|,
\qquad |F_j(M)|\le n\Lambda\|M\|,\quad \mathrm{for}\ M,N\in\Sn.\]
By Arzel\`a--Ascoli, after passing to a subsequence (still denoted by $F_j$), we obtain
\[
F_j\longrightarrow F_\infty\quad\text{locally uniformly on }\Sn,
\qquad F_\infty(0)=0 
\]
and 
\[
\Mm(M-N)\le F_\infty(M)-F_\infty(N)\le\Mp(M-N).
\]
Since  $u_j$ satisfies $F_j(D^2 u_j)=H_j(x,Du_j)$ with $|H_j(x,p)|\le\varepsilon_j(1+|p|)\le 1+|p|$,
\[
\Mp(D^2u_j)+|Du_j|\ge-1,
\qquad \Mm(D^2u_j)-|Du_j|\le1
\]
in the viscosity sense. For every $K\Subset B_1^+$, since $B_1\cap\{x_n>\sigma_j\}\subset\Omega_j\cap B_1$ with $\sigma_j\rightarrow 0$,
\[
\dist\bigl(K,\partial(\Omega_j\cap B_1)\bigr)
\ge\tfrac12\dist(K,\partial B_1^+)>0
\qquad \text{for\ large}\ j.
\]
By Proposition~\ref{prop:holder}, there exists a subsequence (still denoted by $u_j$) such that
\[u_j\longrightarrow u_\infty\quad\text{locally uniformly in }B_1^+,
\qquad |u_\infty|\le1.
\]

\smallskip\noindent\textbf{Step 2: The limit equation.}
Let $\phi\in C^2$ touch $u_\infty$ from above at $x_0\in B_1^+$.
Choose $r>0$ such that
\[
\overline B_r(x_0)\Subset B_1^+,
\qquad
(u_\infty-\phi)(x_0)=0,
\qquad u_\infty-\phi\le0\quad\text{on }\overline B_r(x_0).
\]
Set $\psi(x)=\phi(x)+|x-x_0|^4$. Then
\[
u_\infty(x)-\psi(x)\le-|x-x_0|^4,
\qquad
D\psi(x_0)=D\phi(x_0),\quad D^2\psi(x_0)=D^2\phi(x_0).
\]
For large $j$, $\overline B_r(x_0)\subset\Omega_j\cap B_1$.
Let $x_j$ be the maximum point of $u_j-\psi$ in $\overline B_r(x_0)$, that is,
\[
(u_j-\psi)(x_j)=\max_{\overline B_r(x_0)}(u_j-\psi).
\]
Hence
\[
|x_j-x_0|^4
\le(u_\infty-\psi)(x_0)-(u_\infty-\psi)(x_j) 
\le2\|u_j-u_\infty\|_{L^\infty(B_r(x_0))}
\rightarrow0.
\]
Thus $x_j\to x_0$ and $x_j\in B_r(x_0)$ for large $j$.
By Definition~\ref{def:viscosity}(a),
\[
F_j(D^2\psi(x_j))
\ge H_j(x_j,D\psi(x_j)) 
\ge-\varepsilon_j\bigl(1+\|D\psi\|_{L^\infty(B_r(x_0))}\bigr)\rightarrow0.
\]
Moreover, we obtain
\[
\bigl|F_j(D^2\psi(x_j))-F_\infty(D^2\phi(x_0))\bigr|
 \le n\Lambda\|D^2\psi(x_j)-D^2\phi(x_0)\|
 +\bigl|(F_j-F_\infty)(D^2\phi(x_0))\bigr|
 \rightarrow0,
\]
which implies
\[
F_\infty(D^2\phi(x_0))
=\lim_{j\to\infty}F_j(D^2\psi(x_j))\ge0.
\]
For a lower test, use $\psi(x)=\phi(x)-|x-x_0|^4$ and minima. The same argument gives
\[
F_j(D^2\psi(x_j))\le\varepsilon_j
\bigl(1+\|D\psi\|_{L^\infty(B_r(x_0))}\bigr)\longrightarrow0,
\qquad F_\infty(D^2\phi(x_0))\le0.
\]
Consequently, 
\[
F_\infty(D^2u_\infty)=0\qquad\text{in }B_1^+
\]
in the viscosity sense.

\smallskip\noindent\textbf{Step 3: Zero boundary values and flat approximation.}
Fix $x$ with $|x'|<1/2$ and $0<x_n<1/4$. For large $j$,
\[
\sigma_j<\min\{x_n,1/4-x_n\}
\quad\Longrightarrow\quad
x\in\Omega_j\cap B_1,\qquad 0<x_n+\sigma_j<1/4.
\]
Applying Lemma~\ref{lem:barrier}  with $m=\varepsilon_j$, we obtain
\[
|u_\infty(x)|
=\lim_{j\to\infty}|u_j(x)|
\le\lim_{j\to\infty}\bigl[\varepsilon_j+K_b(x_n+\sigma_j)\bigr]
=K_bx_n\ \ \mathrm{for}\ \ x\in\{|x'|<1/2,0<x_n<1/4\}.
\]
Fix $z=(z',0)\in T_{1/2}$. We choose 
$0<r<\min\{1/4,1/2-|z'|\}$ such that
\[B_1^+\cap B_r(z)\subset\{|x'|<1/2,0<x_n<1/4\}.\]
Hence
\[
\sup_{x\in B_r(z)\cap B_1^+}|u_\infty(x)|\le K_br
\rightarrow0,\  r\rightarrow0.
\]
Consequently, the limit function $u_\infty\in C(B_1^+\cup T_{1/2})$ satisfies
\begin{equation}\label{eq:limit-boundary-problem}
\begin{cases}
F_\infty(D^2u_\infty)=0&\text{in }B_{1/2}^+,\\
u_\infty=0&\text{on }T_{1/2},
\end{cases}
\end{equation}
with
\[\|u_\infty\|_{L^\infty(B_{1/2}^+)}\le1.\]
Apply Proposition~\ref{prop:flat} to $y\mapsto u_\infty(y/2)$, whose normalized operator is $X\mapsto\frac14F_\infty(4X)$. Since $C_0\ge4\bar C$, there exists
$a\in\R$ such that
\[
|a|\le C_0,
\qquad |u_\infty(x)-ax_n|\le C_0|x|^2
\quad \mathrm{for\ any}\ x\in B_{1/4}^+.
\]

\smallskip\noindent\textbf{Step 4: Transfer to the varying domains.}
 Choose
\[
0<s<\min\{\eta,1/8,\eta^{1+\alpha}/[8(K_b+C_0)]\}.
\]
For large $j$, we have $s+\sigma_j<1/4$ and
\begin{equation*}
\varepsilon_j+(K_b+C_0)\sigma_j\le \eta^{1+\alpha}/8.
\end{equation*}
If $x\in\Omega_j\cap B_\eta$ and $x_n\le s$, then
$-\sigma_j<x_n\le s$. By Lemma \ref{lem:barrier},
\begin{align*}
|u_j(x)-ax_n|
\le\varepsilon_j+K_b(x_n+\sigma_j)+C_0|x_n|
\le\varepsilon_j+(K_b+C_0)(s+\sigma_j)
\le\eta^{1+\alpha}/4.
\end{align*}
For $x\in\Omega_j\cap B_\eta$ with $x_n\ge s$, we have $
\overline B_\eta\cap\{x_n\ge s\}\Subset B_{1/4}^+$ and then
$\sup_{\overline B_\eta\cap\{x_n\ge s\}}|u_j-u_\infty|
\le \eta^{1+\alpha}/4$ for large $j$. 
It then follows that
\[
|u_j(x)-ax_n|
\le |u_j(x)-u_\infty(x)|+C_0\eta^2
\le \eta^{1+\alpha}/2.
\]
Combining the two regions, we obtain
\[
\|u_j-ax_n\|_{L^\infty(\Omega_j\cap B_\eta)}
<\eta^{1+\alpha},\qquad |a|\le C_0,
\]
contrary to \eqref{eq:but} with  $c=a$.
\end{proof}

\subsection{Proof of the boundary pointwise estimate}\label{sec:mainproof}

We organize the proof as in \cite[proof of Theorem~1.6]{LZ20}:
normalize the data, construct affine approximations by induction, and
pass to their limit.

\begin{proof}[Proof of Theorem~\ref{thm:main}]
We proceed in three steps.

\par\medskip\noindent\textbf{Step 1. Normalization of the data.}\par\nobreak
By {Proposition~\ref{prop:continuous}}, $u$ satisfies
\begin{equation}\label{eq:continuous-boundary-problem}
\begin{cases}
F(D^2u)=f(x)|Du|^{-\gamma}&\text{in }\Omega_1,\\
u=g&\text{on }\partial\Omega\cap B_1,
\end{cases}
\end{equation}
in the ordinary viscosity sense. {Set
\[
\mathcal N=\|u\|_{L^\infty(\Omega_1)}+|g(0)|+|b|+K_g
+\|f\|_{L^\infty(\Omega_1)}^{1/(1+\gamma)}.
\]
}If $\mathcal N=0$, then $u=0$ and
we take $L=0$. Assume $\mathcal N>0$.
Fix $C_0,\eta,\varepsilon_0$ as in Lemma~\ref{lem:one-step}, and set
\[
\varepsilon_* = \frac{\varepsilon_0}{2+2C_0},
\qquad M=\varepsilon_*^{-1/(1+\gamma)}\mathcal N.
\]
Choose
\begin{equation}\label{eq:rho}
\rho=\min\left\{\frac14,
 \left(\frac{\varepsilon_*}{1+K_\Omega}\right)^{1/\alpha}\right\}.
\end{equation}
For $y\in\rho^{-1}\Omega\cap B_1$, define
\begin{equation}\label{eq:full-normalization}
\begin{aligned}
w(y)&=\frac{u(\rho y)-g(0)-\rho b\cdot y}{M},\\
F_\rho(X)&=\frac{\rho^2}{M}
 F\!\left(\frac{M}{\rho^2}X\right),
\qquad
f_\rho(y)=\frac{\rho^{2+\gamma}}{M^{1+\gamma}}f(\rho y).
\end{aligned}
\end{equation}
For a test function $\phi$ of $w$, the corresponding test function of
$u$ is $g(0)+b\cdot x+M\phi(x/\rho)$, with gradient and Hessian
\[
b+\frac{M}{\rho}D\phi
 =\frac{M}{\rho}\left(\frac{\rho b}{M}+D\phi\right),
\qquad \frac{M}{\rho^2}D^2\phi,
\]
respectively. Thus
\begin{equation}\label{eq:normalized-boundary-problem}
\begin{cases}
F_\rho(D^2w)=f_\rho(y)
 \left|\dfrac{\rho b}{M}+Dw\right|^{-\gamma}
 &\text{in }\rho^{-1}\Omega\cap B_1,\\[5pt]
w(y)=\dfrac{g(\rho y)-g(0)-\rho b\cdot y}{M}
 &\text{on }\partial(\rho^{-1}\Omega)\cap B_1.
\end{cases}
\end{equation}
Here $F_\rho(0)=0$, and $F_\rho$ has the same ellipticity constants as
$F$. Since $0<1+\gamma<1$ and $M\ge\mathcal N$,
\[
\begin{aligned}
\|w\|_{L^\infty(\rho^{-1}\Omega\cap B_1)}
&\le\frac{\|u\|_\infty+|g(0)|+\rho|b|}{M}\le1,\\
\|f_\rho\|_{L^\infty(\rho^{-1}\Omega\cap B_1)}
&\le\varepsilon_*\rho^{2+\gamma}
 \frac{\|f\|_\infty}{\mathcal N^{1+\gamma}}
 \le\varepsilon_*,\\
\left|\frac{\rho b}{M}\right|&\le1,
\qquad K_\Omega\rho^\alpha\le\varepsilon_*.
\end{aligned}
\]
Moreover, for $y\in\partial(\rho^{-1}\Omega)\cap B_1$,
\[
|w(y)|\le\frac{K_g\rho^{1+\alpha}}{M}|y|^{1+\alpha}
\le\varepsilon_*^{1/(1+\gamma)}\rho^{1+\alpha}|y|^{1+\alpha}
\le\varepsilon_*|y|^{1+\alpha}.
\]
The scaled geometric inclusions are
\[
\begin{aligned}
B_1\cap\{y_n>\varepsilon_*|y'|^{1+\alpha}\}
&\subset\rho^{-1}\Omega,\\
B_1\cap\{y_n<-\varepsilon_*|y'|^{1+\alpha}\}
&\subset(\rho^{-1}\Omega)^c.
\end{aligned}
\]

\par\medskip\noindent\textbf{Step 2. Induction.}\par\nobreak
We construct $a_k\in\R$, with $a_0=0$, such that
\begin{equation}\label{ak}
\begin{aligned}
\|w-a_ky_n\|_{L^\infty(\rho^{-1}\Omega\cap B_{\eta^k})}
&\le\eta^{k(1+\alpha)} &&(k\ge0),\\
|a_k-a_{k-1}|&\le C_0\eta^{(k-1)\alpha} &&(k\ge1).
\end{aligned}
\end{equation}
The case $k=0$ follows from $\|w\|_\infty\le1$.
Assume that \eqref{ak} holds through $k$. Since $\eta^\alpha\le1/2$,
\[
|a_k|\le C_0\sum_{j=0}^{k-1}\eta^{j\alpha}
\le\frac{C_0}{1-\eta^\alpha}\le2C_0,
\]
where the sum is zero when $k=0$.
Set $r=\eta^k$ and
\[
\begin{aligned}
v_k(y)&=\frac{w(ry)-a_k r y_n}{r^{1+\alpha}},\\
F_k(X)&=r^{1-\alpha}F_\rho(r^{\alpha-1}X),\\
H_k(y,p)&=r^{1-\alpha}f_\rho(ry)
 \left|\frac{\rho b}{M}+a_ke_n+r^\alpha p\right|^{-\gamma}.
\end{aligned}
\]
Then
\begin{equation}\label{eq:iteration-pde}
\begin{cases}
F_k(D^2v_k)=H_k(y,Dv_k)
 &\text{in }(\rho r)^{-1}\Omega\cap B_1,\\[4pt]
v_k(y)=\dfrac{w(ry)-a_k r y_n}{r^{1+\alpha}}
 &\text{on }\partial((\rho r)^{-1}\Omega)\cap B_1.
\end{cases}
\end{equation}
We verify the assumptions of Lemma~\ref{lem:one-step}.
First, \eqref{ak} gives $\|v_k\|_{L^\infty((\rho r)^{-1}\Omega\cap B_1)}\le1$,
and $F_k(0)=0$. For $N\ge0$,
\[
\begin{aligned}
\lambda\|N\|
&\le r^{1-\alpha}\bigl[
 F_\rho(r^{\alpha-1}(X+N))-F_\rho(r^{\alpha-1}X)\bigr]\\
&=F_k(X+N)-F_k(X)\le\Lambda\|N\|.
\end{aligned}
\]
Next, $0<-\gamma<1$ implies
\[
|z_1+z_2|^{-\gamma}\le|z_1|^{-\gamma}+|z_2|^{-\gamma},
\qquad |p|^{-\gamma}\le1+|p|.
\]
Consequently,
\begin{equation}\label{eq:iteration-H}
\begin{aligned}
|H_k(y,p)|
&\le\varepsilon_*r^{1-\alpha}
 \bigl[(1+2C_0)^{-\gamma}+r^{-\alpha\gamma}|p|^{-\gamma}\bigr]\\
&\le\varepsilon_*(2+2C_0+|p|)
\le\varepsilon_0(1+|p|).
\end{aligned}
\end{equation}
The geometric inclusions in STEP~1 yield
\[
B_1\cap\{y_n>\varepsilon_*r^\alpha\}
\subset(\rho r)^{-1}\Omega\cap B_1
\subset B_1\cap\{y_n>-\varepsilon_*r^\alpha\},
\qquad \varepsilon_*r^\alpha\le\varepsilon_0.
\]
Finally, for $y\in\partial((\rho r)^{-1}\Omega)\cap B_1$,
\[
|y_n|\le\varepsilon_*r^\alpha|y'|^{1+\alpha},
\]
and therefore
\[
\begin{aligned}
|v_k(y)|
&\le r^{-1-\alpha}|w(ry)|+|a_k|r^{-\alpha}|y_n|\\
&\le\varepsilon_*|y|^{1+\alpha}
 +2C_0\varepsilon_*|y'|^{1+\alpha}
\le(1+2C_0)\varepsilon_*\le\varepsilon_0.
\end{aligned}
\]
Lemma~\ref{lem:one-step} now provides $\bar a\in\R$ such that
\[
|\bar a|\le C_0,
\qquad
\|v_k-\bar a y_n\|_{L^\infty((\rho r)^{-1}\Omega\cap B_\eta)}
\le\eta^{1+\alpha}.
\]
Define $a_{k+1}=a_k+r^\alpha\bar a$. Then
\[
\begin{aligned}
|a_{k+1}-a_k|&\le C_0r^\alpha=C_0\eta^{k\alpha},\\
\|w-a_{k+1}y_n\|_{L^\infty(\rho^{-1}\Omega\cap B_{\eta r})}
&=r^{1+\alpha}
 \|v_k-\bar a y_n\|_{L^\infty((\rho r)^{-1}\Omega\cap B_\eta)}\\
&\le r^{1+\alpha}\eta^{1+\alpha}
 =\eta^{(k+1)(1+\alpha)}.
\end{aligned}
\]
Thus \eqref{ak} holds for $k+1$, completing the induction.

\par\medskip\noindent\textbf{Step 3. Pointwise $C^{1,\alpha}$ regularity.}\par\nobreak
For $m>k$, \eqref{ak} gives
\[
|a_m-a_k|\le C_0\sum_{j=k}^{m-1}\eta^{j\alpha}
\le2C_0\eta^{k\alpha}.
\]
Hence the entire sequence is Cauchy, and its limit $a$ satisfies
\[
a_k\longrightarrow a,
\qquad |a|\le2C_0,
\qquad |a-a_k|\le2C_0\eta^{k\alpha}.
\]
For $y\in\rho^{-1}\Omega$ with $0<|y|<\eta$, choose $k\ge1$ such that
$\eta^{k+1}\le|y|<\eta^k$. Then
\begin{equation}\label{eq:normalized-result}
\begin{aligned}
|w(y)-ay_n|
&\le|w(y)-a_ky_n|+|a_k-a|\,|y_n|\\
&\le\eta^{k(1+\alpha)}+2C_0\eta^{k\alpha}|y|\\
&\le(1+2C_0)\eta^{-(1+\alpha)}|y|^{1+\alpha}.
\end{aligned}
\end{equation}
Returning to the original variables, set
\begin{equation}\label{eq:final-affine}
L(x)=g(0)+b\cdot x+\frac{M}{\rho}ax_n,
\qquad r_0=\rho\eta.
\end{equation}
For $x\in\Omega\cap B_{r_0}$,
\[
|u(x)-L(x)|
=M|w(x/\rho)-a x_n/\rho| 
\le CM\rho^{-(1+\alpha)}|x|^{1+\alpha}
 \le C\mathcal N|x|^{1+\alpha}\]
and
\[|DL|\le|b|+2C_0M/\rho\le C\mathcal N.
\]
Here $C$ and $r_0$ depend only on
$n,\lambda,\Lambda,\gamma,\alpha,K_\Omega$.
This proves \eqref{eq:main-error}--\eqref{eq:main-slope}.

To prove uniqueness, suppose $u-L_i=o(|x|)$ in $\Omega$, $i=1,2$.
Continuity gives $L_i(0)=g(0)$. For every fixed $v$ with $v_n>0$,
\[
tv_n>K_\Omega t^{1+\alpha}|v'|^{1+\alpha}
\quad\text{for all sufficiently small }t>0,
\qquad tv\in\Omega.
\]
Thus
\[
|(DL_1-DL_2)\cdot v|
=\lim_{t\downarrow0}\frac{|L_1(tv)-L_2(tv)|}{t}=0.
\]
The linear functional $(DL_1-DL_2)\cdot v$ vanishes on an open
half-space, so $DL_1=DL_2$ and $L_1=L_2$.
\end{proof}

\section{\texorpdfstring{Boundary $W^{2,\delta}$ estimates on $C^{1,\alpha}$ domains}{Boundary W2,delta estimates on C1,alpha domains}}
\label{sec:boundary-sobolev}
In this section, we  prove Theorem \ref{thm:boundary-sobolev}   by interior $W^{2,\delta_0}$  estimates (Corollary \ref{lem:lili}), boundary  $C^{1,\alpha}$ estimates (Theorem \ref{thm:main}) and  Whitney decomposition (Lemma \ref{l2.1}  and \ref{lf}).

\subsection{Whitney decomposition}
In what follows, by a cube we mean a closed cube in $\mathbb{R}^n$, with sides parallel to the axes. We say two such cubes are disjoint if their interiors are disjoint.

\bigskip

\begin{lemma}[Whitney decomposition]\label{l2.1}
Let $\Omega$ be a non-empty {proper} open set in $\mathbb{R}^n$. Then there exist two sequences of cubes $Q_k$(called the Whitney cubes of $\Omega$) and $\widetilde Q_k=\frac65{Q_k}$ ($\frac 65-$dilation of $Q_k$ with respect to center of $Q_k$) such that

(i) $\Omega=\bigcup_{k=1}^{\infty}Q_{k}=\bigcup_{k=1}^{\infty}\widetilde Q_k$;

(ii) The $Q_k$ are mutually disjoint;

(iii) $d_k\leq \mathrm{dist}\ (Q_k,\partial\Omega)\leq 4d_k$, where $d_k=\diam Q_k$;

(iv) Each point of $\Omega$ is contained in at most $12^n$ of the cubes $\widetilde Q_k$.
\end{lemma}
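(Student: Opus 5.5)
The plan is to follow the classical dyadic construction of Whitney cubes, as in Stein's \emph{Singular Integrals}, adjusting the constants to the normalization $d_k\le\dist(Q_k,\partial\Omega)\le4d_k$ and the dilation factor $\frac65$. Write $\Omega^c$ for the complement, which is nonempty and closed since $\Omega$ is a proper open set; thus $\dist(x,\partial\Omega)=\dist(x,\Omega^c)>0$ and finite for $x\in\Omega$. Let $\mathcal D_j$ denote the family of closed dyadic cubes of side $2^{-j}$, $j\in\mathbb Z$, and diameter $\sqrt n\,2^{-j}$. Partition $\Omega$ into the layers
\[
\Omega_j=\{x\in\Omega:\ 2\sqrt n\,2^{-j}<\dist(x,\Omega^c)\le 4\sqrt n\,2^{-j}\},
\]
and let $\mathcal F_0=\{Q\in\mathcal D_j: Q\cap\Omega_j\ne\emptyset,\ j\in\mathbb Z\}$.

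For $Q\in\mathcal F_0$ with $Q\in\mathcal D_j$ and $x\in Q\cap\Omega_j$, the triangle inequality gives $\dist(Q,\Omega^c)\ge\dist(x,\Omega^c)-\diam Q>\sqrt n2^{-j}=\diam Q$ and $\dist(Q,\Omega^c)\le\dist(x,\Omega^c)\le4\diam Q$. This yields (iii) and also shows $Q\subset\Omega$. Since every $x\in\Omega$ lies in some $\Omega_j$ and in some cube of $\mathcal D_j$, the family $\mathcal F_0$ covers $\Omega$.

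To obtain disjointness (ii), I will use the nesting property of dyadic cubes: any two dyadic cubes are either nonoverlapping or one contains the other. Each $Q\in\mathcal F_0$ is contained in a maximal element of $\mathcal F_0$. Such a maximal element exists because a cube containing $Q$ of side $2^{-i}$ must satisfy (iii), which forces $2^{-i}\lesssim\dist(Q,\Omega^c)$, and so the ascending chain is finite. Taking $\{Q_k\}$ to be the maximal cubes gives (i) for $Q_k$, (ii), and (iii).

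For $\widetilde Q_k=\frac65Q_k$, the dilation adds at most $\frac1{10}d_k$ in each direction, so $\dist(\widetilde Q_k,\Omega^c)\ge d_k-\frac1{10}d_k>0$. Hence $\widetilde Q_k\subset\Omega$, and (i) holds for the dilated cubes as well.

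The main step requiring care is the bounded overlap (iv). The approach is to first show that if $\widetilde Q_k\cap\widetilde Q_l\ne\emptyset$ (or if both contain a point $x$), then the sizes are comparable. Indeed, from (iii), $d_k\le\dist(Q_k,\Omega^c)\le\dist(x,\Omega^c)$ and $\dist(x,\Omega^c)\le\dist(Q_k,\Omega^c)+\frac{11}{10}d_k\le\frac{51}{10}d_k$. Thus all cubes $\widetilde Q_k\ni x$ satisfy $\dist(x,\Omega^c)/5.1\le d_k\le\dist(x,\Omega^c)$. Since the side lengths are powers of $2$, only three possible values of $d_k$ occur.

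For a fixed dyadic size $s$, the disjoint cubes of side $s$ whose $\frac65$-dilates contain $x$ must lie within a cube of side roughly $\frac{11}{5}s$ around $x$, so at most $3^n$ of them occur. Over the three possible sizes, this gives at most $3\cdot3^n\le12^n$ cubes, which proves (iv). I expect only this counting step to need checking of constants; the rest is routine.
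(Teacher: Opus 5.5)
Your proposal is correct and is the standard construction from Stein (Theorem 1 and Proposition 3 of Section VI.1), which the paper cites instead of giving its own proof; your direct pointwise count in (iv) is a slightly simpler version of Stein's touching-cubes argument. One small slip: for $x\in\widetilde Q_k\setminus Q_k$ the inequality $\dist(Q_k,\Omega^c)\le\dist(x,\Omega^c)$ can fail, but your own bound $\dist(\widetilde Q_k,\Omega^c)\ge\frac{9}{10}d_k$ gives $d_k\le\frac{10}{9}\dist(x,\Omega^c)$, and since $\frac{51}{10}\cdot\frac{10}{9}<8$ there are still at most three dyadic sizes, so the count $3\cdot 3^n\le 12^n$ holds.
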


For the proof of the above lemma, we refer to Theorem 1 and Proposition 3 in Section VI.1 in \cite{ST}.
The following   lemma is proved in \cite{li2023}.

\begin{lemma}\label{lf}
Suppose that $\Omega$ satisfies 
\begin{equation}\label{domain}
0\in\partial\Omega,\quad
\Omega\cap B_1=\{x\in B_1:x_n>\psi(x')\},\quad
\partial\Omega\cap B_1=\{x\in B_1:x_n=\psi(x')\}.
\end{equation}
for some function $\psi \in C^{0,1}(B_1')$.  We have the following conclusions.

(i)\begin{equation}\label{1/4}
\Omega_{1/{12}}\subset\bigcup_{\widetilde Q_k\subset \Omega_{1/{4}}} Q_k.
\end{equation}

(ii)If $q>n-1$, then
\begin{equation}\label{sum}
\sum\limits_{\widetilde{Q}_k\subset \Omega_{1/4}} d_k^q\leq C,
\end{equation}
where $C$ depends on $n$, $q$ and $||\psi||_{C^{0,1}(B_1')}$.
\end{lemma}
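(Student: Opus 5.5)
Both parts of Lemma~\ref{lf} come from relating a Whitney cube's size to its distance from the origin and from the boundary. Two facts are used throughout. First, by Lemma~\ref{l2.1}(iii), for $x\in Q_k$ we have $d_k\le\dist(x,\partial\Omega)\le 5d_k$. Second, since $\Omega\cap B_1$ is the supergraph of the Lipschitz function $\psi$ with $\psi(0)=0$, there is $c_0=(1+\|\psi\|_{C^{0,1}})^{-1}$ such that, for $x\in\Omega_{1/2}$,
\[
c_0(x_n-\psi(x'))\le\dist(x,\partial\Omega)\le x_n-\psi(x').
\]
For the second inequality one has to check that the nearest boundary point lies in $B_1$ when $|x|$ is small; this follows from $\dist(x,\partial\Omega)\le|x|$, because $0\in\partial\Omega$.

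\textbf{Part (i).} Take $x\in\Omega_{1/12}$. By Lemma~\ref{l2.1}(i), $x$ lies in some $Q_k$, and we must show $\widetilde Q_k\subset\Omega_{1/4}$. Since $0\in\partial\Omega$, we get $d_k\le\dist(x,\partial\Omega)\le|x|<1/12$. Every point of $\widetilde Q_k$ lies within $\frac65 d_k$ of $x$, so for $y\in\widetilde Q_k$,
\[
|y|<\tfrac1{12}+\tfrac65\cdot\tfrac1{12}<\tfrac14 .
\]
It remains to see $\widetilde Q_k\subset\Omega$. Since $\dist(Q_k,\partial\Omega)\ge d_k$ and $\widetilde Q_k$ is within $\frac1{10}d_k$ of $Q_k$, $\widetilde Q_k$ does not meet $\partial\Omega$. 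It is connected and contains points of $\Omega$, so $\widetilde Q_k\subset\Omega$. Hence $\widetilde Q_k\subset\Omega_{1/4}$.

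\textbf{Part (ii).} We group the cubes dyadically by size. Let $\mathcal F_j$ be the set of cubes with $\widetilde Q_k\subset\Omega_{1/4}$ and $2^{-j-1}<d_k\le 2^{-j}$. Every such cube lies in $\Omega_{1/4}$ and satisfies $\dist(Q_k,\partial\Omega)\le 4d_k$. Its points therefore lie in the strip
\[
S_j=\{x\in B_{1/4}: 0<x_n-\psi(x')\le C\,2^{-j}\},\qquad C=C(n,\|\psi\|_{C^{0,1}}),
\]
where we used $x_n-\psi(x')\le c_0^{-1}\dist(x,\partial\Omega)$. By Fubini over $x'\in B'_{1/4}$, $|S_j|\le C\,2^{-j}$. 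The cubes are disjoint and each has volume comparable to $2^{-jn}$, so $\#\mathcal F_j\le C\,2^{j(n-1)}$. Also $d_k\le 1/4$, so only $j\ge 1$ occurs. Therefore
\[
\sum_{\widetilde Q_k\subset\Omega_{1/4}} d_k^q\le\sum_{j\ge1}C\,2^{j(n-1)}2^{-jq}<\infty,
\]
and the series converges precisely because $q>n-1$.

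The main obstacle is the equivalence between $\dist(x,\partial\Omega)$ and the vertical distance $x_n-\psi(x')$. The graph description holds only inside $B_1$, so one must make sure nearest boundary points stay in $B_1$; the restriction to $\Omega_{1/4}$ and the bound $\dist(x,\partial\Omega)\le|x|$ handle this. Everything else is routine counting.
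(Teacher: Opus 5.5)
Your proof is correct. The paper does not prove this lemma itself; it only cites \cite{li2023}. Your argument is the standard one. Part (i) needs only $0\in\partial\Omega$, the bound $d_k\le\dist(x,\partial\Omega)\le|x|$, and $\diam\widetilde Q_k=\frac65 d_k$. Part (ii) is the dyadic count $\#\mathcal F_j\le C2^{j(n-1)}$, obtained by packing disjoint cubes of volume comparable to $2^{-jn}$ into a boundary strip of measure comparable to $2^{-j}$.

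Two small remarks.

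First, you attach the observation that the nearest boundary point $z$ satisfies $|z|\le 2|x|<1$ to the ``second'' inequality. In fact it justifies the \emph{lower} bound $c_0(x_n-\psi(x'))\le\dist(x,\partial\Omega)$. Write $t=x_n-\psi(x')$ and let $\ell$ be the Lipschitz constant of $\psi$. Since $z$ lies on the graph, $|x-z|\ge\max\{|x'-z'|,\,t-\ell|x'-z'|\}\ge t/(1+\ell)$. This lower bound is the only one part (ii) uses, so the upper bound can simply be dropped.

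Second, in the proof of Theorem~\ref{thm:local-whitney} the Whitney cubes are those of $\Omega_1$, not $\Omega$. This does not affect your argument. For $x\in\Omega\cap B_{1/2}$, both nearest points (to $\partial\Omega$ and to $\partial\Omega_1$) lie within $|x|$ of $x$, hence in $B_1$. There $\partial\Omega_1$ and $\partial\Omega$ coincide, so $\dist(x,\partial\Omega_1)=\dist(x,\partial\Omega)$ and your proof applies verbatim.
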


\subsection{\texorpdfstring{Interior $W^{2,\delta_0}$ estimates}{Interior W2,delta0 estimates}}

\begin{theorem}[Interior $W^{2,\delta_0}$ estimate for singular inequalities]\label{thm:lili-inequalities}
 Let $-1<\gamma<0$ and suppose
$u\in C(B_1)\cap L^\infty(B_1)$ and $f\in C(B_1)\cap L^\infty(B_1)$
satisfy, in the singular viscosity sense,
\begin{equation}\label{eq:lili-inequalities}
|Du|^\gamma\Mm(D^2u)-|Du|^{1+\gamma}
\le f(x)\le
|Du|^\gamma\Mp(D^2u)+|Du|^{1+\gamma}
\quad\text{in }B_1.
\end{equation}
Then there {exists a constant $\delta_0\in(0,1)$} depending on $n,\lambda,\Lambda$ such that
\begin{equation}\label{eq:lili-interior}
\left(\int_{B_{1/2}}\bigl(|Du|^{\delta_0}+|D^2u|^{\delta_0}\bigr)\,dx
\right)^{1/\delta_0}
\le C\left(\norm u_{L^\infty(B_1)}
+\norm f_{L^\infty(B_1)}^{1/(1+\gamma)}\right),
\end{equation}
where $C$ depends on $n,\lambda,\Lambda,\gamma$ and $\delta_0$.
\end{theorem}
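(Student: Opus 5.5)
This theorem is essentially the main result of Li and Li~\cite{LiLi17}, stated on $B_1$ for singular Pucci inequalities with bounded continuous right-hand side. The proof plan is therefore a reduction to their estimate, not a new argument. First we normalize. Set
\[
\mathcal N=\norm u_{L^\infty(B_1)}+\norm f_{L^\infty(B_1)}^{1/(1+\gamma)},
\]
which we may assume positive. Then replace $u$ by $u/\mathcal N$ and $f$ by $f/\mathcal N^{1+\gamma}$. Since $|D(u/\mathcal N)|^\gamma\mathcal M^\pm(D^2(u/\mathcal N))=\mathcal N^{-(1+\gamma)}|Du|^\gamma\mathcal M^\pm(D^2u)$, and the gradient term $|Du|^{1+\gamma}$ scales the same way, inequality \eqref{eq:lili-inequalities} is preserved. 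So we may assume $\norm u_\infty\le1$ and $\norm f_\infty\le1$. The estimate \eqref{eq:lili-interior} then follows by homogeneity, with the right-hand side $C\mathcal N$.

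Second, we recall the structure of the proof in \cite{LiLi17}, to justify that the cited theorem applies exactly in this form. The argument proceeds in three steps.
\begin{enumerate}
\item Using Proposition~\ref{prop:continuous}, one rewrites the inequalities as
\[
\Mm(D^2u)-|Du|\le |f||Du|^{-\gamma}\quad\text{and}\quad \Mp(D^2u)+|Du|\ge-|f||Du|^{-\gamma}
\]
in the usual viscosity sense. This holds because $|Du|^{-\gamma}\cdot|Du|^{1+\gamma}=|Du|$.
\item With $|p|^{-\gamma}\le 1+|p|$, these reduce to uniformly elliptic inequalities with bounded right-hand side and a linear gradient term. For such inequalities, the sliding-paraboloid measure estimate of Savin/Armstrong--Silvestre--Smart type gives the power decay
\[
|\{x\in B_{1/2}:\ \Theta(x)>t\}|\le Ct^{-\varepsilon},
\]
where $\Theta$ is the touching-paraboloid opening; the same decay bounds $|D^2u|$ and $|Du|$ pointwise a.e.
\item Choosing $\delta_0<\varepsilon$ and integrating the decay yields the $L^{\delta_0}$ bound for $D^2u$ and $Du$ on $B_{1/2}$. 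The ball geometry is used only in the covering step there, and interior balls are convex, so nothing changes.
\end{enumerate}

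The main point to check is the bounded gradient coefficient. It comes from the term $|Du|^{1+\gamma}$ in \eqref{eq:lili-inequalities}, which is exactly the form treated in \cite{LiLi17}. After normalization the coefficient equals $1$, so $\delta_0$ depends only on $n,\lambda,\Lambda$ and the constant on $n,\lambda,\Lambda,\gamma,\delta_0$. We will therefore state the result as a citation of \cite[Theorem~1.1]{LiLi17} together with this scaling remark, and not reprove the measure estimates.
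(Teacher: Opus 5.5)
Your proposal is correct and takes the same route as the paper, which gives no argument of its own and simply attributes this estimate to Li and Li~\cite{LiLi17}. Your normalization $u\mapsto u/\mathcal N$, $f\mapsto f/\mathcal N^{1+\gamma}$ is valid and explains the linear dependence on $\mathcal N$; one caveat for your sketch is that Proposition~\ref{prop:continuous} is stated only for equations $|Du|^\gamma F(D^2u)=f$, not for the Pucci inequalities with the $|Du|^{1+\gamma}$ term, so step (i) should invoke its proof (which carries over, since the extra term $|D\psi_t(x_t)|$ also tends to zero) rather than its statement.
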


The proof of Theorem \ref{thm:lili-inequalities} can be found in \cite{LiLi17}. By uniform ellipticity, any singular viscosity solution $u$ to the equation $|Du|^\gamma F(D^2u)=f$ automatically satisfies the inequality   \eqref{eq:lili-inequalities}. This observation enables us to apply Theorem \ref{thm:lili-inequalities}  to obtain the corresponding interior estimate for singular equations, as stated in the following corollary.

\begin{corollary}[Interior $W^{2,\delta_0}$ estimate for singular equation]\label{lem:lili}
If $u\in C(B_1)\cap L^\infty(B_1)$ is a singular viscosity solution of
\[
|Du|^\gamma F(D^2 u)=f\quad\text{in }B_1,
\qquad f\in C(B_1)\cap L^\infty(B_1),
\]
then \eqref{eq:lili-interior} holds for the interior exponent
$\delta_0$ fixed in Theorem~\ref{thm:lili-inequalities}.
\end{corollary}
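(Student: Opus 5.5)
This corollary follows by checking that a singular viscosity solution of $|Du|^\gamma F(D^2u)=f$ in $B_1$ satisfies the two singular viscosity inequalities in \eqref{eq:lili-inequalities}. Theorem~\ref{thm:lili-inequalities} then applies verbatim, with the same $\delta_0=\delta_0(n,\lambda,\Lambda)$. The check uses only the test-function definition, Definition~\ref{def:viscosity}(b), together with the bounds $\Mm(M)\le F(M)\le\Mp(M)$ from $F(0)=0$ and uniform ellipticity.

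\textbf{Non-constant points.} Let $x_0$ be a point at which $u$ is not locally constant, and let $\varphi\in C^2$ touch $u$ from above at $x_0$ with $D\varphi(x_0)\ne0$. The subsolution property gives
\[
f(x_0)\le|D\varphi(x_0)|^\gamma F(D^2\varphi(x_0))\le|D\varphi(x_0)|^\gamma\Mp(D^2\varphi(x_0))+|D\varphi(x_0)|^{1+\gamma},
\]
since $|D\varphi|^\gamma>0$ and the added term is nonnegative. This is the subsolution half of \eqref{eq:lili-inequalities}, read as ``$|Du|^\gamma\Mp(D^2u)+|Du|^{1+\gamma}\ge f$''. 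Symmetrically, for lower tests with nonzero gradient,
\[
f(x_0)\ge|D\varphi|^\gamma F(D^2\varphi)\ge|D\varphi|^\gamma\Mm(D^2\varphi)-|D\varphi|^{1+\gamma}.
\]
This is the supersolution half.

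\textbf{Locally constant points.} If $u$ is locally constant at $x_0$, part (ii) of the definition gives $f\le0$ and $f\ge0$, hence $f=0$, on a ball $B_\delta(x_0)$. This is exactly requirement (ii) for both inequalities, since the singular inequality convention for \eqref{eq:lili-inequalities} uses the same constant-branch condition. The only point needing care is to interpret \eqref{eq:lili-inequalities} in the same singular viscosity sense as in \cite{LiLi17}, namely nonzero test gradients plus the constant-branch condition. No real obstacle arises beyond matching these conventions.

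With the inequalities verified, Theorem~\ref{thm:lili-inequalities} yields \eqref{eq:lili-interior} directly, with constants depending only on $n,\lambda,\Lambda,\gamma,\delta_0$.
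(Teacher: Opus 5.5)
Your proposal is correct and takes the same approach as the paper, which simply observes that $\Mm(M)\le F(M)\le\Mp(M)$ makes every singular viscosity solution satisfy \eqref{eq:lili-inequalities}, and then invokes Theorem~\ref{thm:lili-inequalities}. You make the test-function check explicit, including the locally constant branch where $f\equiv0$, which the paper leaves implicit.
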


\subsection{\texorpdfstring{Boundary $W^{2,\delta}$ estimates}{Boundary W2,delta estimates}}
Theorem \ref{thm:boundary-sobolev} follows easily from the following Theorem~\ref{thm:local-whitney} which is proved by interior $W^{2,\delta_0}$  estimates (Corollary \ref{lem:lili}), boundary  $C^{1,\alpha}$ estimates (Theorem \ref{thm:main}) and  Whitney decomposition (Lemma \ref{l2.1}  and \ref{lf}).
\begin{theorem}[Local boundary estimate]\label{thm:local-whitney}
Let $0<\alpha<1$, $-1<\gamma<0$, and let $F$ satisfy \eqref{eq:ue}
and $F(0)=0$. Suppose that \eqref{domain} holds with
$\psi\in C^{1,\alpha}(B'_1)$ and that
$u\in C(\overline\Omega\cap B_1)\cap L^\infty(\Omega_1)$ satisfies
\begin{equation}\label{eq:local-whitney-problem}
\begin{cases}
|Du|^\gamma F(D^2u)=f(x)&\text{in }\Omega\cap B_1,\\
u=g&\text{on }\partial\Omega\cap B_1,
\end{cases}
\end{equation}
where $f\in C(\Omega_1)\cap L^\infty(\Omega_1)$ and
$g\in C^{1,\alpha}(\partial\Omega\cap B_1)$.
Let $\delta_0$ be as in Corollary~\ref{lem:lili}.
For every
\begin{equation}\label{eq:boundary-exponents}
0<\delta\le\delta_0,\qquad\delta<1,
\end{equation}
we have
\begin{equation}\label{eq:local-whitney}
\norm u_{W^{2,\delta}(\Omega_{1/12})}
\le C\left(\norm u_{L^\infty(\Omega_1)}
+\norm g_{C^{1,\alpha}(\partial\Omega\cap B_1)}
+\norm f_{L^\infty(\Omega_1)}^{1/(1+\gamma)}\right).
\end{equation}
The constant depends on $n,\lambda,\Lambda,\gamma,\alpha,\delta_0,\delta$
and $\norm\psi_{C^{1,\alpha}(B'_1)}$.
\end{theorem}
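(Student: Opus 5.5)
Set $\mathcal N=\norm u_{L^\infty(\Omega_1)}+\norm g_{C^{1,\alpha}(\partial\Omega\cap B_1)}+\norm f_{L^\infty(\Omega_1)}^{1/(1+\gamma)}$. The plan is to cover $\Omega_{1/12}$ by the Whitney cubes $Q_k$ with $\widetilde Q_k\subset\Omega_{1/4}$, as in Lemma~\ref{lf}(i). On each such cube we apply Corollary~\ref{lem:lili} after rescaling and after subtracting a constant boundary value. The resulting local bounds are then summed using Lemma~\ref{lf}(ii) with $q=n-\delta>n-1$.

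\textbf{Step 1: uniform boundary $C^{1,\alpha}$ control.} Since $\psi\in C^{1,\alpha}(B_1')$, every boundary point $y\in\partial\Omega\cap B_{1/2}$ satisfies \eqref{eq:geometry} in suitably rotated coordinates. The radius is fixed (say $1/2$, after scaling) and $K_\Omega$ depends only on $\norm\psi_{C^{1,\alpha}}$. Likewise $g$ satisfies \eqref{eq:g-data} at $y$ with constants bounded by $\norm g_{C^{1,\alpha}}$. Theorem~\ref{thm:main} therefore gives $|u(x)-L_y(x)|\le C\mathcal N|x-y|^{1+\alpha}$ for $|x-y|<r_0$, with $L_y(y)=g(y)$ and $|DL_y|\le C\mathcal N$. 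Combining this with the trivial bound $|u|\le\mathcal N$ when $|x-y|\ge r_0$, we get
\[
|u(x)-g(y)|\le C\mathcal N|x-y|\qquad\text{for } x\in\Omega_{1/4},\ y\in\partial\Omega\cap B_{1/2}.
\]

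\textbf{Step 2: the estimate on one cube.} Fix $\widetilde Q_k\subset\Omega_{1/4}$ with center $x_k$. By Lemma~\ref{l2.1}(iii), $\dist(x_k,\partial\Omega)\approx d_k$. Choose $y_k\in\partial\Omega$ nearest to $x_k$; then $y_k\in B_{1/2}$, and a ball $B_{R}(x_k)$ with $R=c_nd_k$ both contains $\widetilde Q_k$ and has $B_{2R}(x_k)\subset\Omega_1$. For instance $\widetilde Q_k\subset B_{(3/5)d_k}(x_k)$ and $\dist(x_k,\partial\Omega)\ge d_k$, so concentric balls work after adjusting the constant or covering by boundedly many balls. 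Set
\[
v(z)=\frac{u(x_k+2Rz)-g(y_k)}{2R}\qquad\text{on } B_1.
\]
This rescaling preserves the singular equation: $|Dv|^\gamma F(D^2v)=\tilde f$ with $\tilde f(z)=(2R)^{1+\gamma}f(x_k+2Rz)$, because $|Dv|^\gamma D^2v$ scales as $(2R)^{1-\gamma\cdot 0}$, more precisely $Dv=Du$ and $D^2v=2R\,D^2u$. Subtracting the constant $g(y_k)$ keeps the equation unchanged, which is why the paper subtracts a constant rather than the affine $L_{y_k}$: an affine subtraction would shift the gradient inside the singular factor. Step~1 gives $\norm v_{L^\infty(B_1)}\le C\mathcal N$, and $\norm{\tilde f}_\infty^{1/(1+\gamma)}\le 2R\norm f_\infty^{1/(1+\gamma)}\le C\mathcal N$. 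Corollary~\ref{lem:lili} then yields $\int_{B_{1/2}}(|Dv|^{\delta_0}+|D^2v|^{\delta_0})\le C\mathcal N^{\delta_0}$. By H\"older's inequality the same holds for any $\delta\le\delta_0$. Scaling back gives
\[
\int_{\widetilde Q_k}|D^2u|^\delta\le C\mathcal N^\delta d_k^{\,n-\delta},\qquad \int_{\widetilde Q_k}|Du|^\delta\le C\mathcal N^\delta d_k^{\,n}.
\]
The $|u|^\delta$ term is trivial.

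\textbf{Step 3: summation.} By Lemma~\ref{lf}(i),
\[
\int_{\Omega_{1/12}}|D^2u|^\delta\le\sum_{\widetilde Q_k\subset\Omega_{1/4}}C\mathcal N^\delta d_k^{\,n-\delta}.
\]
Since $\delta<1$ we have $n-\delta>n-1$, so Lemma~\ref{lf}(ii) bounds the sum by $C\mathcal N^\delta$; the gradient terms are handled the same way. Taking $\delta$-th roots gives \eqref{eq:local-whitney}.

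\textbf{Main obstacle.} The delicate point is Step~1: verifying that Theorem~\ref{thm:main} applies at every $y\in\partial\Omega\cap B_{1/2}$ with uniform constants. This requires rotating coordinates at each $y$ so that the graph of $\psi$ satisfies \eqref{eq:geometry} inside a fixed ball, rescaling to unit size, and checking that $\norm g_{C^{1,\alpha}}$ controls $|g(y)|$, $|b|$ and $K_g$ uniformly. A further care point is ensuring the nearest point $y_k$ actually lies in the region where this uniform control holds, which follows from $\widetilde Q_k\subset\Omega_{1/4}$ together with Lemma~\ref{l2.1}(iii).
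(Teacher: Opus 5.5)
Your proposal is correct and follows the paper's proof essentially step for step. On each Whitney cube with $\widetilde Q_k\subset\Omega_{1/4}$ you subtract the constant $g(y_k)$, use Theorem~\ref{thm:main} (with the trivial bound beyond the uniform radius) to get $|u-g(y_k)|\le C\mathcal N d_k$, apply the scaled interior estimate of Corollary~\ref{lem:lili} and H\"older to obtain $C\mathcal N^\delta d_k^{\,n-\delta}$ per cube, and sum via Lemma~\ref{lf} with $q=n-\delta>n-1$.

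There is one harmless slip. Since $F$ need not be homogeneous, your rescaling must use the operator $X\mapsto 2R\,F(X/(2R))$, and then the new right-hand side is $2R\,f(x_k+2Rz)$, not $(2R)^{1+\gamma}f$. Because $R\le1$, the bound $\norm{\tilde f}_{L^\infty}^{1/(1+\gamma)}\le C\mathcal N$ that you need still holds.
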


\begin{proof}
Write
\[
\mathcal N=\norm u_{L^\infty(\Omega_1)}
+\norm g_{C^{1,\alpha}(\partial\Omega\cap B_1)}
+\norm f_{L^\infty(\Omega_1)}^{1/(1+\gamma)}.
\]

Let $\{Q_k\}_{k=1}^{\infty}$ be Whitney decomposition of $\Omega_{1}$, $d_k=\diam Q_k$ and ${\widetilde Q_k}=\frac 65 Q_k$.
For any $\widetilde Q_k\subset \Omega_{1/4}$, choose
 $y_k\in (\partial\Omega)_{1/2}$ and $\tilde x_k\in \partial\widetilde Q_k$ such that
$$|\tilde x_k-y_k|=\dist(\widetilde Q_k, \partial\Omega_1)<\dist(Q_k, \partial\Omega_1)\leq 4d_k,$$
where Lemma \ref{l2.1} (iii) is used in the last inequality.
Consequently, we see that
$$ |x-y_k|\leq|x-\tilde x_k|+|\tilde x_k-y_k|\leq (\frac65+4)d_k<6d_k,\ \ \forall x\in\widetilde Q_k.$$

By  Theorem \ref{thm:main}, $u$ is $C^{1,\alpha}$ at $y_k$ and then there exists an affine function $L_{y_k}$  such that
\[L_{y_k}(y_k)=g(y_k),\qquad |D L_{y_k}|\le C\mathcal N\]
and
\begin{equation}\label{3.4}
|u(x)-L_{y_k}(x)|\leq C |x-y_k|^{1+\alpha}\mathcal{N}\leq  C d_k^{1+\alpha}\mathcal{N},\ \forall x\in \widetilde Q_k,
\end{equation}
where $C$ depends on $n,\lambda,\Lambda,\alpha,{\gamma}$ and $||\psi||_{C^{1,\alpha}(B_1')}$.
{For $|x-y_k|$ outside the uniform radius in Theorem~\ref{thm:main}, the same bound follows, after enlarging $C$, from the bounds for $u$, $g(y_k)$ and $DL_{y_k}$.}
 Hence  
\begin{equation}\label{eq:whitney-boundary-growth}
|u(x)-g(y_k)|
\le |u(x)-L_{y_k}(x)|
   +|L_{y_k}(x)-g(y_k)|\le C\mathcal N d_k^{1+\alpha}
   +|DL_{y_k}|\,|x-y_k|
\le C\mathcal N d_k.
\end{equation}

Since  $u-g(y_k)$ {still satisfies the singular equation}
\[|D(u-g(y_k))|^\gamma F(D^2(u-g(y_k)))=f\quad\text{in }\ \ \widetilde Q_k,\]
by the scaled interior estimate in Corollary~\ref{lem:lili} and \eqref{eq:whitney-boundary-growth},
\begin{equation}\label{eq:whitney-estimate}
\begin{aligned}
\int_{Q_k}|D^2u|^{\delta_0}\,dx
&\le C\Bigl\{
 d_k^{n-2\delta_0}
 \norm{u-g(y_k)}_{L^\infty(\widetilde Q_k)}^{\delta_0}
 +d_k^{\,n-\frac{\gamma\delta_0}{1+\gamma}}
 \norm f_{L^\infty(\widetilde Q_k)}^{\frac{\delta_0}{1+\gamma}}
 \Bigr\}\\
&\le C\mathcal N^{\delta_0}
 \Bigl(d_k^{n-\delta_0}
       +d_k^{\,n-\frac{\gamma\delta_0}{1+\gamma}}\Bigr)
 \le C\mathcal N^{\delta_0}d_k^{n-\delta_0}.
\end{aligned}
\end{equation}
For $0<\delta\le\delta_0$, by {H\"older's inequality} and the preceding
estimate,
\[
\begin{aligned}
\int_{Q_k}|D^2u|^\delta\,dx
&\le |Q_k|^{1-\delta/\delta_0}
 \left(\int_{Q_k}|D^2u|^{\delta_0}\,dx\right)^{\delta/\delta_0}\\
&\le C\mathcal N^\delta
 d_k^{n(1-\delta/\delta_0)+(n-\delta_0)\delta/\delta_0}
 =C\mathcal N^\delta d_k^{n-\delta}.
\end{aligned}
\]

Summing over the Whitney cubes, we obtain
\[
\sum_{\widetilde Q_k\subset\Omega_{1/4}}
 \int_{Q_k}|D^2u|^\delta\,dx
\le C\mathcal N^\delta
 \sum_{\widetilde Q_k\subset\Omega_{1/4}}d_k^{n-\delta}.
\]
Since $\delta<1$, we have $n-\delta>n-1$. Consequently,
Lemma~\ref{lf}(i)--(ii) yields
\[
\begin{aligned}
\int_{\Omega_{1/12}}|D^2u|^\delta\,dx
&\le\sum_{\widetilde Q_k\subset\Omega_{1/4}}
 \int_{Q_k}|D^2u|^\delta\,dx\\
&\le C\mathcal N^\delta
 \sum_{\widetilde Q_k\subset\Omega_{1/4}}d_k^{n-\delta}
 \le C\mathcal N^\delta.
\end{aligned}
\]
The gradient term follows in the same way from the scaled interior
estimate:
\[
\int_{Q_k}|Du|^{\delta_0}\,dx
\le C\Bigl\{
 d_k^{n-\delta_0}
 \norm{u-g(y_k)}_{L^\infty(\widetilde Q_k)}^{\delta_0}
 +d_k^{\,n+\frac{\delta_0}{1+\gamma}}
 \norm f_{L^\infty(\widetilde Q_k)}^{\frac{\delta_0}{1+\gamma}}
 \Bigr\}\le C\mathcal N^{\delta_0}d_k^n.\]
By {H\"older's inequality},
\[\int_{Q_k}|Du|^\delta\,dx\le |Q_k|^{1-\delta/\delta_0}
 \left(\int_{Q_k}|Du|^{\delta_0}\,dx\right)^{\delta/\delta_0}
 \le C\mathcal N^\delta d_k^n.
\]

Combining these estimates gives
\[
\left(\int_{\Omega_{1/12}}
 \bigl(|u|^\delta+|Du|^\delta+|D^2u|^\delta\bigr)\,dx
\right)^{1/\delta}
\le C\mathcal N,
\]
which is \eqref{eq:local-whitney}. The constant depends on
$n,\lambda,\Lambda,\gamma,\alpha,\delta_0,\delta$ and
$\norm\psi_{C^{1,\alpha}(B'_1)}$.
The derivatives are understood in the almost-everywhere sense specified
in the Notation paragraph.
\end{proof}

Theorem~\ref{thm:boundary-sobolev} now follows from
Theorem~\ref{thm:local-whitney} by a finite covering argument,
as in~\cite{LiLi23}.

\begin{proof}[Proof of Theorem~\ref{thm:boundary-sobolev}]
Since $T$ is $C^{1,\alpha}$, for each $x\in T$ there exist
$0<r_x\le1$ and a $C^{1,\alpha}$ function $\psi_x$ such that
$\partial\Omega\cap B_{r_x}(x)\subset T$ and, in suitable orthogonal coordinates,
\[
\begin{aligned}
\Omega_{r_x}(x)
 &=\{z\in B_{r_x}(x):z_n>\psi_x(z')\},\\
\partial\Omega\cap B_{r_x}(x)
 &=\{z\in B_{r_x}(x):z_n=\psi_x(z')\}.
\end{aligned}
\]
The set $T\cap\overline{\Omega'}=\partial\Omega\cap\overline{\Omega'}$
is compact. Thus finitely many balls $B_{r_i/12}(x_i)$,
$i=1,\ldots,N$, with $x_i\in T$ and $r_i=r_{x_i}$, cover this set.

Let $0<\delta\le\delta_0<1$, where $\delta_0$ is the interior exponent
in Corollary~\ref{lem:lili}.
In the coordinates of the $i$th boundary chart, the rescaling
$v_i(y)=r_i^{-2}u(x_i+r_i y)$ changes the right-hand side to
$r_i^{-\gamma}f(x_i+r_i y)$ and preserves the ellipticity constants.
Applying Theorem~\ref{thm:local-whitney} in each rescaled chart gives
\[
\begin{aligned}
&r_i^{-2-n/\delta}
 \norm u_{L^\delta(\Omega_{r_i/12}(x_i))}
 +r_i^{-1-n/\delta}
 \norm{Du}_{L^\delta(\Omega_{r_i/12}(x_i))}
 +r_i^{-n/\delta}
 \norm{D^2u}_{L^\delta(\Omega_{r_i/12}(x_i))}\\
&\le C_i\Bigl[
 r_i^{-2}\norm u_{L^\infty(\Omega_{r_i}(x_i))}
 +r_i^{-2}\norm g_{C^{1,\alpha}(T)}
 +r_i^{-\gamma/(1+\gamma)}
  \norm f_{L^\infty(\Omega_{r_i}(x_i))}^{1/(1+\gamma)}
 \Bigr].
\end{aligned}
\]
Here $C_i$ depends on the structural parameters and the corresponding
$C^{1,\alpha}$ boundary chart.

The remaining set
\[
K=\overline{\Omega'}\setminus\bigcup_{i=1}^N B_{r_i/12}(x_i)
\]
is a compact subset of $\Omega$.
Cover $K$ by finitely many balls $B_{s_j/2}(z_j)$ with
$B_{s_j}(z_j)\Subset\Omega$.
Corollary~\ref{lem:lili}, scaling and H\"older's inequality give
\[
\int_K\bigl(|u|^\delta+|Du|^\delta+|D^2u|^\delta\bigr)\,dx
\le C\left(\norm u_{L^\infty(\Omega)}
+\norm f_{L^\infty(\Omega)}^{1/(1+\gamma)}\right)^\delta.
\]
Summing the corresponding integrals over the boundary neighborhoods
and the interior balls proves \eqref{eq:boundary-sobolev}.
The finite covers and their radii depend only on $T,\Omega'$ and $\Omega$,
so the constant has the stated dependence.
\end{proof}

\section*{Acknowledgement}
This work was supported by National Natural Science Foundation of China (No. 12401257) and the Natural Science Foundation of Shandong Province, China (No. ZR2024QA045).

\section*{Data availability}
Data will be made available on request.
\section*{Declarations}	
The authors declare {that} they have no financial interests.

\end{document}